\def\rightharpoonfill@{\arrowfill@\relbar\relbar\rightharpoonup}
\DeclareRobustCommand{\overrightharpoon}{\mathpalette{\underarrow@\rightharpoonfill@}}
\begin{document}
\newcommand{\beq}{\begin{equation}}
\newcommand{\eb}{\begin{equation}}
\newcommand{\eneq}{\end{equation}}
\newcommand{\ee}{\end{equation}}
\newtheorem{thm}{Theorem}[section]
\newtheorem{coro}[thm]{Corollary}
\newtheorem{lem}[thm]{Lemma}
\newtheorem{prop}[thm]{Proposition}
\newtheorem{defi}[thm]{Definition}
\newtheorem{rem}[thm]{Remark}
\newtheorem{cl}[thm]{Claim}
\title{Regularity of hyperbolic magnetic Schr\"{o}dinger equation with oscillating coefficients}
\author{Xiaojun Lu$^{1}$\ \ \ \ Xiaofen Lv$^2$}
\thanks{Corresponding author: Xiaojun Lu, Department of Mathematics \& Jiangsu Key Laboratory of
Engineering Mechanics, Southeast University, 210096, Nanjing, China}
\thanks{Keywords: Hyperbolic magnetic Schr\"{o}dinger equation, loss
of regularity, optimality of regularity, microlocal analysis,
instability arguments}
\thanks{Mathematics Subject Classification: 35L10, 35L90, 35S10, 81V10}
\date{}
\maketitle
\pagestyle{fancy}                   
\lhead{X. Lu and X. Lv} \rhead{Regularity of hyperbolic magnetic Schr\"{o}dinger equation} 
\begin{center}
1. Department of Mathematics \& Jiangsu Key Laboratory of
Engineering Mechanics, Southeast University, 210096, Nanjing,
China\\
2. Jiangsu Testing Center for Quality of Construction Engineering
Co., Ltd, 210028, Nanjing, China
\end{center}
\vspace{.5cm}
\begin{abstract}
This paper mainly discuss the regularity behavior of the hyperbolic
magnetic Schr\"{o}dinger equation with singular coefficients near
the origin. We apply the techniques from the microlocal analysis to
explore the upper bound of loss of regularity. Furthermore, in order
to demonstrate the optimality of the result, a delicate
counterexample with periodic coefficients will be constructed to
show the lower bound of loss of regularity by the application of
harmonic analysis and instability arguments.
\end{abstract}
\section{Introduction to hyperbolic magnetic Schr\"{o}dinger equation and its regularity behavior}
Mathematically speaking, the magnetic field {\bf B} is a solenoidal
vector field whose field line either forms a closed curve or extends
to infinity. In contrast, a field line of the electric field {\bf E}
starts at a positive charge and ends at a negative charge. For
instance, the earth's magnetic field is a consequence of the
movement of convection currents in the outer ferromagnetic liquid of
the core. In the study of quantum mechanics, a magnetic field is
produced by electric fields varying in time, spinning of the
elementary particles, or moving electric charges, etc. Nowadays,
electromagnetic theory is widely utilized in medical research of
organs' biomagnetism, vortex study in the superconductor which
carries quantized magnetic flux, and geographical cataclysms
forecasts, such as
earthquakes, volcanic eruptions, geomagnetic reversal, etc. \cite{CA,LF,MG}.\\

Let ${\bf A}$ be the vector potential of ${\bf B}$, which does not
depend on time, ${\bf B}=\nabla\times{\bf A}.$ Clearly, $
\nabla\cdot{\bf B}={\rm div}\ {\rm rot}{\bf A}=0.$ We deduce from
the Maxwell's equation ($\mu$ is the magnetic permeability)
$\nabla\times{\bf E}=-\mu{\partial{\bf B}}/{\partial t}=0$ that
${\bf E}=-\nabla\phi,$ where the scalar $\phi$ represents the
electric potential. Next we choose an appropriate Lagrangian for the
charged particle in the electromagnetic field ($q$ is the electric
charge of the particle, and ${\bf v}$ is its velocity, $m$ is mass)
$\mathscr{L}={m{\bf v}^2}/{2}-q\phi+q{\bf v}\cdot{\bf A}.$ The
canonical momentum is specified by the equation ${\bf p}=\nabla_{\bf
v}\mathscr{L}=m{\bf v}+q{\bf A}.$ Then we define the classical
Hamiltonian by Legendre transform, $\mathscr{H}:={\bf p}\cdot{\bf
v}-\mathscr{L}={({\bf p}-q{\bf A})^2}/{(2m)}+q\phi.$ In quantum
mechanics, we replace ${\bf p}$ by $-i\hbar\nabla$ ($\hbar$ is the
Planck constant), $\mathscr{H}={(i\hbar\nabla+q{\bf
A})^2}/{(2m)}+q\phi.$ This Hamiltonian operator phenomenologically
describes a quantity of behaviors discovered in superconductors and
quantum electrodynamics(QED). Ginzburg-Landau equations,
Schr\"{o}dinger equations, Dirac equations and the matrix Pauli
operator are famous examples in these
respects \cite{HK,ME,MR}.\\

To be more specific, our mathematical model arises from the
discussion of the extrema of the following variational problem
$I[u]$ in the study of quantum mechanics \cite{L1, MT1, MT3},
($u\in\mathcal{U}$ will be explained in Section 2),
$$\displaystyle I[u]:={1}/{2}\int_\Omega\Big(|u_t|^2-b^2(t)|(i\nabla+{\bf A}(x))u|^2-\phi(x)|u|^2\Big)dx.$$
Let the Lagrangian be
$$\mathcal{L}(t,x_1,\cdots,x_n,u,\bar{u},u_t,\bar{u}_t,\nabla u,\nabla \bar{u})\triangleq|u_t|^2-b^2(t)|(i\nabla+{\bf A}(x))u|^2-\phi(x)|u|^2.$$
The Euler-Lagrangian equation for $\mathcal{L}$ is of the form
$$\frac{\partial\mathcal{L}}{\partial u}-\frac{\partial}{\partial t}\Big(\frac{\partial\mathcal{L}}{\partial u_t}\Big)-\displaystyle\sum_{i=1}^n\frac{\partial}{\partial x_i}\Big(\frac{\partial\mathcal{L}}{\partial u_{x_i}}\Big)=0.$$
In fact, simple calculation leads to
$$\frac{\partial\mathcal{L}}{\partial u}=b^2(t)\Big(i{\bf
A}\cdot\nabla\bar{u}-{\bf A}^2\bar{u}-\phi(x)\bar{u}\Big),\ \
\frac{\partial\mathcal{L}}{\partial u_t}=\bar{u}_t,\ \
\frac{\partial\mathcal{L}}{\partial
u_{x_i}}=b^2(t)\Big(-\bar{u}_{x_i}-ia_i\bar{u}\Big).$$ Consequently,
one has
$$u_{tt}+b^2(t)(i\nabla+{\bf
A})^2u+\phi(x)u=0.$$

As is known, loss of regularity is an essential topic when we study
the well-posedness of partial differential equations. For instance,
there is no loss for Cauchy problem of the classical wave operator
and Klein-Gordon operator. In other words, for a sufficiently large
Sobolev index $s$, when $(u_0,u_1)\in H^{s+1}\times H^{s}$ are
given, then there exists a unique solution $u$ belonging to the
following function spaces: $ C([0,\infty);H^{s+1})\bigcap
C^1([0,\infty);H^{s}).$ For the classical wave equation with
variable coefficients, \cite{FC9,L3,REI1} introduced a
classification of regularity behaviors incurred by the singular
coefficients. As for the difference of regularity for initial Cauchy
data, \cite{MC3} considered the typical $p$-evolution model in
1-Dimension. In effect, the principal operator determined the
difference $p$ when its coefficient
is Log-Lipschitz continuous with respect to the time \cite{RA2,MC4,MC5}.\\

Let $\Omega\subset\mathbb{R}^N$ be a bounded open set with a
time-independent vector potential ${\bf A}\in
(C^1(\overline{\Omega}))^N$, its boundary $\Gamma\in C^2$. Assume
that $\mathcal{H}(\Omega)$ is a Hilbert space. From the Hamiltonian,
we can define the corresponding vector operator
$$\mathscr{H}_{\bf A}:= i\nabla+{\bf
A}(x):\mathcal{H}(\Omega)\to (\mathcal{H}(\Omega))^N.$$ The
concerned function spaces will be detailed in the next section. In
this work, we address the regularity behavior of the hyperbolic
magnetic Schr\"{o}dinger equation without influence from the
electric field ${\bf E}$,
\beq\left\{\begin{array}{ccc} u_{tt}+b^2(t)\mathscr{H}_{\bf A}^2u=0&(t,x)\in(0,T)\times\Omega&\\
u=0 &(t,x)\in(0,T)\times\Gamma\\
u(0,x)=u_0(x),u_t(0,x)=u_1(x)&x\in\Omega.&
\end{array}\right.\eneq
And the time-dependent oscillating coefficient $b$ satisfies the
following assumptions:
\begin{itemize}
\item ({\bf Assumption I}): $\displaystyle C_1\leq\inf_{t\in(0,T]}b(t)\leq\sup_{t\in(0,T]}b(t)\leq C_2$,
$C_1,C_2>0$;
\item ({\bf Assumption II}): $b\in C^2(0,T]$, $\displaystyle \Big|\frac{d^{k}}{dt^k}b(t)\Big|\leq
C_3(\nu(t)/t)^{k}$ uniformly for $k=1,2$, where $\nu\in C(0,T]$ is a
positive, strictly decreasing function satisfying
$\displaystyle\inf_{t\in(0,T]}\nu(t)\geq C_4>0.$
\end{itemize}

Under the above assumptions, we are ready to give the main result
concerned with the regularity of the Cauchy problem (1).
\begin{thm}
Define $\mu(t)$ as $t/\nu(t)$. For the Sobolev index $s\geq1$, if
the oscillating coefficient $b$ satisfies Assumptions I and II,
$(u_0,u_1)\in H^s\times H^{s-1}$, then, there exists a unique
solution $u$ belonging to the following function spaces: $$ u\in
C\Big([0,T]; \exp\Big(c_1\nu(\mu^{-1}(2^P/\sqrt{(i\nabla+{\bf
A}(x))^2}))\Big)H^s\Big),$$
$$u_t\in C\Big([0,T]; \exp\Big(c_1\nu(\mu^{-1}(2^P/\sqrt{(i\nabla+{\bf
A}(x))^2}))\Big)H^{s-1}\Big),$$ where $P$ is a fixed appropriate
positive integer and $c_1$ is a positive constant. Moreover,
$\mu^{-1}$ denotes the inverse function of $\mu$.
\end{thm}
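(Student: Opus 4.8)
The plan is to diagonalize (1) by the spectral decomposition of the elliptic operator $\mathscr{H}_{\bf A}^{2}=(i\nabla+{\bf A}(x))^{2}$, which on the bounded domain $\Omega$ with the Dirichlet condition is positive and self-adjoint with discrete spectrum $0<\lambda_{1}^{2}\le\lambda_{2}^{2}\le\cdots\to\infty$ and an orthonormal eigenbasis $\{\phi_{k}\}$ of $\mathcal{H}(\Omega)$ (the framework of Section 2), and to reduce everything to a frequency-by-frequency energy estimate. Writing $u(t,x)=\sum_{k}v_{k}(t)\phi_{k}(x)$ and expanding $u_{0},u_{1}$ in the same basis, each $v_{k}$ solves the scalar Cauchy problem $v_{k}''+b^{2}(t)\lambda_{k}^{2}v_{k}=0$. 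Since $\exp\bigl(c_{1}\nu(\mu^{-1}(2^{P}/\sqrt{\mathscr{H}_{\bf A}^{2}}))\bigr)H^{s}$ is defined through the functional calculus of $\sqrt{\mathscr{H}_{\bf A}^{2}}$, it suffices to establish, uniformly in $k$,
\[
|v_{k}'(t)|^{2}+\lambda_{k}^{2}|v_{k}(t)|^{2}\;\le\;C\exp\bigl(2c_{1}\,\nu(t_{\lambda_{k}})\bigr)\bigl(|v_{k}'(0)|^{2}+\lambda_{k}^{2}|v_{k}(0)|^{2}\bigr),\qquad t_{\lambda_{k}}:=\mu^{-1}(2^{P}/\lambda_{k}),
\]
because summing $\lambda_{k}^{2s-2}e^{-2c_{1}\nu(t_{\lambda_{k}})}$ times the right-hand side against $u_{0}\in H^{s}$, $u_{1}\in H^{s-1}$ then gives the asserted membership of $u$ and $u_{t}$; continuity in $t$ follows from the locally uniform convergence of the series in the weighted norm, and uniqueness from linearity together with this a priori bound (or from uniqueness for each $v_{k}$). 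Note that $\mu(t)=t/\nu(t)$ is a strictly increasing continuous bijection of $(0,T]$ (as $\nu$ is positive, continuous and strictly decreasing), so $\mu^{-1}$ is well defined and $\lambda_{k}t_{\lambda_{k}}=2^{P}\nu(t_{\lambda_{k}})$; for the finitely many low frequencies with $2^{P}/\lambda_{k}\ge\mu(T)$ the construction below degenerates and the crude estimate alone on all of $(0,T]$ already gives a fixed constant.

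For the scalar estimate I would split $(0,T]$ along the separating curve $\lambda_{k}\mu(t)=2^{P}$, i.e.\ at $t=t_{\lambda_{k}}$. In the pseudodifferential zone $(0,t_{\lambda_{k}}]$ I would use only Assumption I: comparing with the frozen-coefficient energy $\widetilde E_{k}(t)=|v_{k}'(t)|^{2}+C_{2}^{2}\lambda_{k}^{2}|v_{k}(t)|^{2}$ and using the equation gives $\widetilde E_{k}'(t)\le c\,\lambda_{k}\widetilde E_{k}(t)$, hence $\widetilde E_{k}(t_{\lambda_{k}})\le e^{c\lambda_{k}t_{\lambda_{k}}}\widetilde E_{k}(0)=e^{c\,2^{P}\nu(t_{\lambda_{k}})}\widetilde E_{k}(0)$, which is exactly the asserted loss.

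In the hyperbolic zone $[t_{\lambda_{k}},T]$ I would pass to the first-order system for $U=(i\lambda_{k}b(t)v_{k},\,v_{k}')^{\mathsf{T}}$, so that $U'=\bigl(i\lambda_{k}b(t)\left(\begin{smallmatrix}0&1\\1&0\end{smallmatrix}\right)+\mathrm{diag}(b'/b,0)\bigr)U$; diagonalize the principal part by a constant matrix $M$ and then remove the $O(\nu(t)/t)$ off-diagonal remainder by one refined step $W\mapsto(I+R_{1})^{-1}W$, where $R_{1}$ is off-diagonal and solves the homological equation so that $\|R_{1}\|\lesssim\dfrac{\nu(t)/t}{\lambda_{k}b(t)}$. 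Here $P$ is fixed large enough that $\|R_{1}\|<1/2$ throughout the zone (this is the "appropriate $P$"). After this step the generator is the anti-Hermitian $i\lambda_{k}b(t)\,\mathrm{diag}(1,-1)$ (no growth), plus the scalar log-derivative $\tfrac{b'}{2b}I$ (contributing only the bounded factor $\sqrt{b(T)/b(t_{\lambda_{k}})}$ by Assumption I), plus a remainder of size $O\bigl((\nu(t)/t)^{2}/\lambda_{k}\bigr)$ — and it is precisely here that the $C^{2}$-regularity of $b$ and the bound $|b''|\lesssim(\nu(t)/t)^{2}$ from Assumption II are used, to control $R_{1}'$. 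Grönwall then costs only $\exp\bigl(c\int_{t_{\lambda_{k}}}^{T}(\nu(t)/t)^{2}\lambda_{k}^{-1}\,dt\bigr)$, and the decisive computation is
\[
\int_{t_{\lambda_{k}}}^{T}\frac{(\nu(t)/t)^{2}}{\lambda_{k}}\,dt\;\le\;\frac{\nu(t_{\lambda_{k}})^{2}}{\lambda_{k}}\int_{t_{\lambda_{k}}}^{T}\frac{dt}{t^{2}}\;\le\;\frac{\nu(t_{\lambda_{k}})^{2}}{\lambda_{k}t_{\lambda_{k}}}\;=\;\frac{\nu(t_{\lambda_{k}})}{2^{P}},
\]
using that $\nu$ is decreasing and $\lambda_{k}t_{\lambda_{k}}=2^{P}\nu(t_{\lambda_{k}})$. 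Patching the two zones at $t=t_{\lambda_{k}}$ (where the micro-energy $|W|^{2}$ is equivalent to $\widetilde E_{k}$ up to uniform constants, since $M$ and $(I+R_{1})^{\pm1}$ are uniformly bounded) yields the displayed ODE estimate with $c_{1}$ of order $2^{P}+2^{-P}$.

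The main obstacle is this hyperbolic-zone analysis: arranging the refined diagonalization so that the off-diagonal remainder genuinely carries the gain $\lambda_{k}^{-1}(\nu(t)/t)^{2}$ using only $C^{2}$-control of $b$ (no smoothness of $\nu$ itself is assumed), and then recognizing that the a priori dangerous integral $\int_{t_{\lambda_{k}}}^{T}(\nu(t)/t)^{2}\lambda_{k}^{-1}\,dt$ is tamed to size $\nu(t_{\lambda_{k}})$ exactly by the definition of the separating curve together with the monotonicity of $\nu$; without this cancellation one would only obtain the weaker loss $\exp(c\,\nu(t_{\lambda_{k}})^{2})$, so securing the sharp exponent is the crux.
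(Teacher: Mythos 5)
Your proposal is correct and follows essentially the same route as the paper: reduction to scalar problems via the spectral decomposition of $\mathscr{H}_{\bf A}^2$, a zone splitting along the curve $t\lambda=2^{P}\nu(t)$, a crude bound costing $\exp(c\,2^{P}\nu(t_\lambda))$ in the pseudodifferential zone, and a two-step (constant plus normal-form) diagonalization in the hyperbolic zone whose remainder in $S^{0}\{-1,2\}$ integrates to $O(\nu(t_\lambda))$ precisely by the definition of the separating line. The only cosmetic differences are that you use a direct Gr\"onwall energy estimate where the paper uses the matrizant representation, and your identification of the sharp-exponent cancellation as the crux matches the paper's Lemma 3.5.
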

\begin{rem}
It is clear that, the oscillating coefficient incurs loss of
regularity. Actually, the above theorem also holds when we replace
$2^P$ by a general sufficiently large positive number. One chooses
$2^P$ since the contrast is very significant while comparing with
the coefficient sequences constructed in the counterexample in
Section 4. Moreover, it is worth noticing that if
$\nu(t)=(\log(1/t))^\gamma$, $\gamma\in (0,1)$, increasing $P$
extends the Sobolev spaces.
\end{rem}
\begin{rem}
The function $\mu^{-1}$ is uniquely determined since $\mu$ is a
strictly increasing function in $(0,T]$. Particularly,  when
$\nu(t)\leq C$, there is no loss of regularity. This theorem shows
explicitly the so-called (at most) $\nu$-loss of regularity which
arises from the singular coefficient near the origin.
\end{rem}
\begin{rem}
We give some typical examples to explain the different influence
from various kinds of oscillating coefficients. Suppose that
$(u_0,u_1)\in H^s\times H^{s-1}$, then according to Theorem 1.1,
there exists a unique solution $u$ belonging to the following
function spaces ($\alpha,\beta,\gamma>0$):
\begin{enumerate}
\item $\nu(t)\sim1$, no loss of regularity,
$$u\in C([0,T];H^s),\ \ \ u_t\in C([0,T];H^{s-1}).$$
\item $\nu(t)=\log({1}/{t})$, finite loss of regularity,
$$u\in C([0,T];H^{s-\alpha}),\ \ \ u_t\in
C([0,T];H^{s-\alpha-1}).$$
\item $\nu(t)=\Big(\log({1}/{t})\Big)^{\gamma}$, $\gamma\in(0,1)$, arbitrarily small loss of regularity,
$$ u\in C\Big([0,T];(\sqrt{(i\nabla+{\bf
A}(x))^2})^{\beta\Big(\log({\sqrt{(i\nabla+{\bf
A}(x))^2}}/{2^{P}})\Big)^{\gamma-1}}H^s\Big),$$
$$u_t\in
C\Big([0,T];(\sqrt{(i\nabla+{\bf
A}(x))^2})^{\beta\Big(\log({\sqrt{(i\nabla+{\bf
A}(x))^2}}/{2^{P}})\Big)^{\gamma-1}}H^{s-1}\Big).$$
\item $\nu(t)=\Big(\log({1}/{t})\Big)\Big(\log^{[2]}({1}/{t})\Big)^{\gamma_2}\cdots\Big(\log^{[n]}({1}/{t})\Big)^{\gamma_n}$,
$\gamma_i\in(0,1]$, $i=2,\cdots,n$, infinite loss of regularity,
$$u\in C\Big([0,T];(\sqrt{(i\nabla+{\bf
A}(x))^2})^{\gamma\Big(\log^{[2]}({\sqrt{(i\nabla+{\bf
A}(x))^2}}/{2^{P}})\Big)^{\gamma_2}\cdots\Big(\log^{[n]}({\sqrt{(i\nabla+{\bf
A}(x))^2}}/{2^{P}})\Big)^{\gamma_n}}H^s\Big),$$
$$u_t\in C\Big([0,T];(\sqrt{(i\nabla+{\bf
A}(x))^2})^{\gamma\Big(\log^{[2]}({\sqrt{(i\nabla+{\bf
A}(x))^2}}/{2^{P}})\Big)^{\gamma_2}\cdots\Big(\log^{[n]}({\sqrt{(i\nabla+{\bf
A}(x))^2}}/{2^{P}})\Big)^{\gamma_n}}H^{s-1}\Big).$$
\end{enumerate}
\end{rem}
The rest of the paper is organized as follows. Section 2 is devoted
to the description of $\mathscr{H}_{\bf A}^2$-induced function
spaces $\mathcal{H}_0^1$, $\mathcal{H}^{-1}$ and introduction of
$\mathscr{H}_{\bf A}^2$-pseudodifferential operators. And the usual
compactness-uniqueness argument is applied to demonstrate a
generalized Poincar\'{e}'s inequality. In Section 3, we use some
powerful tools, such as symbol calculus, normal diagonalisation,
etc. from micro-local analysis and WKB analysis to obtain the
precise $\nu$-loss of regularity. In Section 4, we discuss the
optimality of the loss of regularity by the application of harmonic
analysis and instability arguments.
\section{Prerequisites: basic functional spaces and pseudodifferential operators}
\subsection{$\mathscr{H}_{\bf A}^2$-induced Hilbert spaces}
Now we give a function space induced by the vector operator
$\mathscr{H}_{\bf A}$.
\begin{defi} Let ${\bf A}\in (L^\infty(\Omega))^N$, and we define a complex function space
$$\mathcal{H}^{1}(\Omega):=\{\omega:\omega\in
L^2(\Omega),\mathscr{H}_{\bf A}\omega\in (L^2(\Omega))^N\},$$ which
is equipped with the norm
$$\|\omega\|_{\mathcal{H}^1}:=\sqrt{\|\omega\|_{L^2}^2+\|\mathscr{H}_{\bf
A}\omega\|_{(L^2)^{N}}^2},$$ where
$$\|(\omega_1,\cdots,\omega_N)\|_{(L^2)^N}:= \sqrt{\displaystyle\sum_{\ell=1}^N\|\omega_\ell\|_{L^2}^2}.$$
One defines $\mathcal{H}_0^1$ as the closure of
$\mathscr{D}(\Omega)$ in $\mathcal{H}^1$, and $\mathcal{H}^{-1}$ as
the dual space of $\mathcal{H}_0^1$.
\end{defi}
\begin{lem} Actually, $\mathcal{H}^1$ is an equivalent
definition of the Sobolev space $H^1$. Consequently, the imbeddings
$\mathcal{H}_0^1\hookrightarrow L^2$ and
$L^2\hookrightarrow\mathcal{H}^{-1}$ are both dense and compact.
\end{lem}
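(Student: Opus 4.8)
The plan is to show first that, as sets and up to equivalence of norms, $\mathcal{H}^1(\Omega)$ coincides with the usual Sobolev space $H^1(\Omega)$, and then to deduce the two embedding claims from the Rellich--Kondrachov theorem together with a duality argument.

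For the identification, I would use only that ${\bf A}\in(L^\infty(\Omega))^N$ (which holds since in fact ${\bf A}\in(C^1(\overline{\Omega}))^N$). If $\omega\in L^2(\Omega)$, then multiplication by ${\bf A}$ keeps $\omega$ in $(L^2(\Omega))^N$ with $\|{\bf A}\omega\|_{(L^2)^N}\le\|{\bf A}\|_{(L^\infty)^N}\|\omega\|_{L^2}$. Hence, writing $i\nabla\omega=\mathscr{H}_{\bf A}\omega-{\bf A}\omega$, every $\omega\in\mathcal{H}^1$ has $\nabla\omega\in(L^2)^N$, so $\omega\in H^1$; conversely, writing $\mathscr{H}_{\bf A}\omega=i\nabla\omega+{\bf A}\omega$, every $\omega\in H^1$ lies in $\mathcal{H}^1$. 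Applying the triangle inequality to these two identities and putting $M:=\|{\bf A}\|_{(L^\infty)^N}$ yields $\|\nabla\omega\|_{(L^2)^N}\le\|\mathscr{H}_{\bf A}\omega\|_{(L^2)^N}+M\|\omega\|_{L^2}$ together with the reverse bound, which after squaring gives
\[
(1+M)^{-1}\,\|\omega\|_{H^1}\ \le\ \|\omega\|_{\mathcal{H}^1}\ \le\ (1+M)\,\|\omega\|_{H^1}.
\]
Thus $\mathcal{H}^1=H^1$ with equivalent norms; in particular the closure of $\mathscr{D}(\Omega)$ in $\mathcal{H}^1$ equals its closure in $H^1$, so $\mathcal{H}_0^1=H_0^1(\Omega)$ and $\mathcal{H}^{-1}=H^{-1}(\Omega)$.

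Then I would read off the embeddings. Since $\Omega$ is bounded, the Rellich--Kondrachov theorem gives that $H_0^1(\Omega)\hookrightarrow L^2(\Omega)$ is compact, and it is dense because $\mathscr{D}(\Omega)$ is already dense in $L^2(\Omega)$ (no regularity of $\Gamma$ is needed at this stage; the $C^2$ hypothesis is used only later). For the dual embedding, identify $L^2(\Omega)$ with its own dual so that $L^2\hookrightarrow\mathcal{H}^{-1}=(\mathcal{H}_0^1)^\ast$ is precisely the adjoint of the inclusion $\iota:\mathcal{H}_0^1\hookrightarrow L^2$. Since $\iota$ has dense range, $\iota^\ast$ is injective; since $\iota$ is injective and $\mathcal{H}_0^1$ is reflexive, $\operatorname{range}(\iota^\ast)$ is dense (its closure equals the annihilator $(\ker\iota)^\perp=\mathcal{H}^{-1}$), i.e. $L^2$ is dense in $\mathcal{H}^{-1}$; and since $\iota$ is compact, $\iota^\ast$ is compact by Schauder's theorem. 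This establishes both assertions.

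The argument is entirely routine; the only point that needs a little care is the last step, namely keeping the Gelfand-triple identification $\mathcal{H}_0^1\hookrightarrow L^2\cong(L^2)^\ast\hookrightarrow\mathcal{H}^{-1}$ consistent and invoking Schauder's theorem and the annihilator characterization of $\operatorname{range}(\iota^\ast)$ correctly.
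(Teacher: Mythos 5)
Your proof is correct and follows essentially the same route as the paper: the norm equivalence is obtained exactly as in the text, by writing $i\nabla\omega=\mathscr{H}_{\bf A}\omega-{\bf A}\omega$ and $\mathscr{H}_{\bf A}\omega=i\nabla\omega+{\bf A}\omega$ and applying the triangle inequality with ${\bf A}\in(L^\infty)^N$. The paper stops there and treats the density/compactness claims as immediate consequences, whereas you spell them out (Rellich--Kondrachov, Schauder, and the annihilator characterization of $\operatorname{range}(\iota^\ast)$), which is a harmless and correct elaboration.
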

\begin{proof} Indeed, utilizing the definition of norm in each space, one has
\begin{itemize}
\item $\mathcal{H}^1\hookrightarrow H^1$
$$\begin{array}{lll}
\|\omega\|_{H^1}^2&=&\|\omega\|_{L^2}^2+\|\nabla\omega\|_{(L^2)^N}^2\\
\\
&=&\|\omega\|_{L^2}^2+\|\mathscr{H}_{\bf A}\omega-{\bf A}\omega\|_{(L^2)^N}^2\\
\\
&\leq&\|\omega\|_{L^2}^2+2\|\mathscr{H}_{\bf A}\omega\|_{(L^2)^N}^2+2\|{\bf A}\omega\|_{(L^2)^N}^2\\
\\
&\leq&(1+2N\|{\bf A}\|_{L^\infty}^2)\|\omega\|_{L^2}^2+2\|\mathscr{H}_{\bf A}\omega\|_{(L^2)^N}^2.\\
\\
\end{array}$$
\item $H^1\hookrightarrow \mathcal{H}^1$
$$\begin{array}{lll}
\|\omega\|_{\mathcal{H}^1}^2&=&\|\omega\|_{L^2}^2+\|\mathscr{H}_{\bf A}\omega\|_{(L^2)^N}^2\\
\\
&=&\|\omega\|_{L^2}^2+\|(i\nabla+{\bf A}(x))\omega\|_{(L^2)^N}^2\\
\\
&\leq&\|\omega\|_{L^2}^2+2\|\nabla\omega\|_{(L^2)^N}^2+2\|{\bf A}(x)\omega\|_{(L^2)^N}^2\\
\\
&\leq&(1+2N\|{\bf A}\|_{L^\infty}^2)\|\omega\|_{L^2}^2+2\|\nabla\omega\|_{(L^2)^N}^2.\\
\\
\end{array}$$
\end{itemize}
\end{proof}

To introduce the pseudodifferential operators, first, we give a
series of generalized Green's formulas for the second order operator
$(i\mathscr{H}_{\bf A})^2$ on $H^2$.
\begin{lem} For $u,v\in H^2$, $\Gamma\in C^2$, one has $$\int_{\Omega}(i\mathscr{H}_{\bf
A})^2u\bar{v}dx=\int_\Gamma\frac{\partial
u}{\partial\nu_{i\mathscr{H}_{\bf
A}}}\cdot\bar{v}d\Gamma-\int_{\Omega}i\mathscr{H}_{\bf
A}u\cdot\overline{i\mathscr{H}_{\bf A}v} dx,$$ $$
\int_{\Omega}(i\mathscr{H}_{\bf
A})^2u\bar{v}dx-\int_{\Omega}u\overline{(i\mathscr{H}_{\bf
A})^2v}dx=\int_\Gamma\frac{\partial
u}{\partial\nu_{i\mathscr{H}_{\bf
A}}}\cdot\bar{v}d\Gamma-\int_\Gamma u\cdot\overline{\frac{\partial
v}{\partial\nu_{i\mathscr{H}_{\bf A}}}}d\Gamma,$$
$$\int_{\Omega}(i\mathscr{H}_{\bf A})^2udx=\int_\Gamma\frac{\partial
u}{\partial\nu_{i\mathscr{H}_{\bf A}}}d\Gamma-\int_{\Omega}{\bf
A}(x)\cdot\mathscr{H}_{\bf A}u dx,$$ where \beq\frac{\partial
}{\partial\nu_{i\mathscr{H}_{\bf A}}}:= (\nabla-i{\bf
A})\cdot\nu=\frac{\partial }{\partial\nu}-i{\bf A}(x)\cdot\nu, \eneq
and $\nu$ is the unit outward normal.
\end{lem}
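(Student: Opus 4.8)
The plan is to reduce all three identities to repeated use of the divergence theorem on $\Omega$. This is legitimate because $\Gamma\in C^2$ and $u,v\in H^2(\Omega)$, so $C^\infty(\overline{\Omega})$ is dense in $H^2(\Omega)$ and the traces of $u$, $v$ and of their first order derivatives are well defined on $\Gamma$; hence one first proves the formulas for $u,v\in C^\infty(\overline{\Omega})$ by the classical divergence theorem and then passes to the limit using continuity of the trace maps $H^2(\Omega)\to H^{3/2}(\Gamma)$ and $H^1(\Omega)\to H^{1/2}(\Gamma)$ together with ${\bf A}\in(C^1(\overline{\Omega}))^N$. It is convenient to introduce the covariant (gauge) derivative $D_j:=\partial_{x_j}-iA_j(x)$, $D:=(D_1,\dots,D_N)$, and to note the elementary algebraic identity $i\mathscr{H}_{\bf A}=i(i\nabla+{\bf A})=-(\nabla-i{\bf A})=-D$, so that $(i\mathscr{H}_{\bf A})^2=D\cdot D=\sum_{j=1}^N D_jD_j$, and, since ${\bf A}$ is real valued, $\overline{D_jv}=\partial_{x_j}\bar v+iA_j\bar v$ for every $v$.

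For the first identity I would expand $\int_{\Omega}(i\mathscr{H}_{\bf A})^2u\,\bar v\,dx=\sum_j\int_{\Omega}(\partial_{x_j}-iA_j)(D_ju)\,\bar v\,dx$ and integrate the $\partial_{x_j}$ part by parts, $\int_{\Omega}\partial_{x_j}(D_ju)\,\bar v\,dx=\int_{\Gamma}(D_ju)\bar v\,\nu_j\,d\Gamma-\int_{\Omega}(D_ju)\,\partial_{x_j}\bar v\,dx$. Collecting the interior contributions gives $-\sum_j\int_{\Omega}(D_ju)(\partial_{x_j}\bar v+iA_j\bar v)\,dx=-\int_{\Omega}Du\cdot\overline{Dv}\,dx$ by the conjugation identity above, while $\sum_j(D_ju)\nu_j=Du\cdot\nu=(\nabla-i{\bf A})u\cdot\nu$ is precisely the conormal derivative $\partial u/\partial\nu_{i\mathscr{H}_{\bf A}}$ just defined. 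Finally, using $D=-i\mathscr{H}_{\bf A}$ componentwise (the two minus signs cancel), one has $Du\cdot\overline{Dv}=(i\mathscr{H}_{\bf A}u)\cdot\overline{i\mathscr{H}_{\bf A}v}$, which yields exactly the first formula.

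The second identity then follows formally: write the first formula once for the ordered pair $(u,v)$ and once for $(v,u)$, take the complex conjugate of the latter, and subtract; since $\overline{\int_{\Omega}i\mathscr{H}_{\bf A}v\cdot\overline{i\mathscr{H}_{\bf A}u}\,dx}=\int_{\Omega}i\mathscr{H}_{\bf A}u\cdot\overline{i\mathscr{H}_{\bf A}v}\,dx$, the symmetric interior term drops out and only the two boundary terms $\int_{\Gamma}(\partial u/\partial\nu_{i\mathscr{H}_{\bf A}})\bar v\,d\Gamma-\int_{\Gamma}u\,\overline{\partial v/\partial\nu_{i\mathscr{H}_{\bf A}}}\,d\Gamma$ remain. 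The third identity is the special case $v\equiv1$ of the first (constants belong to $H^2(\Omega)$ since $\Omega$ is bounded): there $i\mathscr{H}_{\bf A}v=i\mathscr{H}_{\bf A}(1)=i{\bf A}$, hence $\overline{i\mathscr{H}_{\bf A}v}=-i{\bf A}$, and $-\int_{\Omega}i\mathscr{H}_{\bf A}u\cdot(-i{\bf A})\,dx=i\int_{\Omega}i\mathscr{H}_{\bf A}u\cdot{\bf A}\,dx=-\int_{\Omega}{\bf A}\cdot\mathscr{H}_{\bf A}u\,dx$, which is the claimed right-hand side. Equivalently, one integrates $\sum_j\int_{\Omega}(\partial_{x_j}-iA_j)(D_ju)\,dx$ directly.

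There is no substantial difficulty in this lemma; the only point requiring genuine care is the justification of the boundary integrals for merely $H^2$ data, i.e. the density-and-trace limiting argument described in the first paragraph, and keeping track of the signs produced when commuting complex conjugation past the factor $i$ and past the real-valued vector potential ${\bf A}$.
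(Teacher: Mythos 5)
Your proof is correct and follows essentially the same route as the paper's: reduce to $u,v\in\mathscr{D}(\overline{\Omega})$ by density and continuity of the trace maps, integrate by parts to obtain the first identity, get the second by conjugating and subtracting, and the third by taking $v\equiv1$. Your covariant-derivative bookkeeping ($D_j=\partial_{x_j}-iA_j$, $(i\mathscr{H}_{\bf A})^2=\sum_jD_jD_j$, $Du\cdot\nu=\partial u/\partial\nu_{i\mathscr{H}_{\bf A}}$) is tidier than the paper's term-by-term expansion and comparison of the ${\bf A}$ cross terms, but the underlying computation is the same.
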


\begin{proof}
Keep in mind the classical trace theory in \cite{RA}. If $\Omega$ is
bounded and $\Gamma\in C^2$, then $\mathscr{D}(\overline{\Omega})$
is dense in $H^2$. And the trace mapping
$v\mapsto\overrightarrow{\gamma}v=(\gamma_0v,\gamma_1v)=(v|_\Gamma,\frac{\partial
v}{\partial\nu}\Big|_\Gamma)$ from $H^2(\Omega)$ to
$H^{{3}/{2}}(\Gamma)\times H^{{1}/{2}}(\Gamma)$ is linear and
continuous. So we prove these identities on
$\mathscr{D}(\overline{\Omega})$. On the one hand,
$$\begin{array}{lll} &&\displaystyle\int_\Omega i\mathscr{H}_{\bf
A}u\cdot\overline{i\mathscr{H}_{\bf
A}v}dx\\
\\
&=&\displaystyle\int_\Omega \nabla u\cdot\nabla\bar{v}dx-\int_\Omega
i{\bf A}(x)u\cdot\nabla\bar{v}dx+\int_\Omega i\nabla u\cdot{\bf
A}(x)\bar{v}dx+\int_\Omega{\bf A}{\bf
A}^Tu\bar{v}dx\\
\\
&=&\displaystyle\int_\Omega \nabla u\cdot\nabla\bar{v}dx+\langle
u,i{\bf A}\cdot\nabla v\rangle_{L^2}+\int_\Gamma i
u\bar{v}\cdot({\bf A}\cdot\nu)d\Gamma+\langle u,i\nabla\cdot {\bf A}
v\rangle_{L^2}+\displaystyle\int_\Omega{\bf A}{\bf A}^Tu\bar{v}dx.
\end{array}$$
On the other hand,
$$\begin{array}{lll} &&
\displaystyle\int_{\Omega}(i\mathscr{H}_{\bf
A})^2u\bar{v}dx=\displaystyle\int_\Gamma\frac{\partial
u}{\partial\nu}\cdot\bar{v}d\Gamma-2\int_\Gamma iu\bar{v}\cdot({\bf
A}(x)\cdot\nu)d\Gamma-\displaystyle\int_\Omega \nabla
u\cdot\nabla\bar{v}dx\\
\\
&&\displaystyle-\langle u,i{\bf A}\cdot\nabla v\rangle_{L^2}-\langle
u,i\nabla\cdot{\bf A} v\rangle_{L^2}-\int_\Omega{\bf A}{\bf
A}^Tu\bar{v}dx.\\
\\
\end{array}$$
Notice the definition (2), and one concludes the proof of the first
identity. The second identity follows immediately when we consider
the conjugate of the first identity. Finally, the third identity is
the special case of $v\equiv1$ of the first identity.
\end{proof}
\begin{rem}
When ${\bf A}\equiv0$, one has the classical Green's formulas for
Laplacian $\Delta$. From the above lemma, we know that
$\mathscr{H}_{\bf A}^2$ is a self-adjoint differential operator on
$H^2\bigcap H_0^1$. In this case, (2) becomes the usual unit outward
normal derivative.
\end{rem}
In the following, one introduces a considerably significant result,
which plays a crucial part in the description of the dual of
$\mathcal{H}_0^1$.
\begin{lem}(Generalized
Poincar\'{e}'s inequality) Let $\Gamma\in C^1$. Then for any
$\omega\in \mathcal{H}_0^1$, there is a constant $C(\Omega)>0$ such
that $$\|\omega\|_{L^2}\leq C(\Omega)\|(i\nabla+{\bf A})
\omega\|_{(L^2)^N}.$$
\end{lem}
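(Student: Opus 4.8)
The plan is a standard compactness--uniqueness argument. Suppose, for contradiction, that no constant $C(\Omega)$ works; then there is a sequence $\{\omega_n\}\subset\mathcal{H}_0^1$ with $\|\omega_n\|_{L^2}=1$ for every $n$ and $\|(i\nabla+{\bf A})\omega_n\|_{(L^2)^N}\to 0$. By Lemma 2.2 the norms $\|\cdot\|_{\mathcal{H}^1}$ and $\|\cdot\|_{H^1}$ are equivalent, so $\{\omega_n\}$ is bounded in $H^1(\Omega)$. Since $\Omega$ is bounded, the Rellich--Kondrachov theorem --- equivalently, the compact imbedding $\mathcal{H}_0^1\hookrightarrow L^2$ recorded in Lemma 2.2 --- yields a subsequence, not relabelled, with $\omega_n\to\omega$ strongly in $L^2(\Omega)$ and $\omega_n\rightharpoonup\omega$ weakly in $H^1(\Omega)$. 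Strong $L^2$ convergence forces $\|\omega\|_{L^2}=1$, so $\omega\neq 0$; and since $\mathcal{H}_0^1$ is a closed, hence weakly closed, subspace of $H^1$, we have $\omega\in\mathcal{H}_0^1$.

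Next I would identify the weak limit. Because $\nabla\omega_n\rightharpoonup\nabla\omega$ weakly in $(L^2)^N$ while ${\bf A}\omega_n\to{\bf A}\omega$ strongly in $(L^2)^N$ (here ${\bf A}\in(L^\infty(\Omega))^N$ and $\omega_n\to\omega$ in $L^2$), it follows that $(i\nabla+{\bf A})\omega_n\rightharpoonup(i\nabla+{\bf A})\omega$ weakly in $(L^2)^N$. Comparing this with $(i\nabla+{\bf A})\omega_n\to 0$ strongly and invoking uniqueness of weak limits, we obtain $\mathscr{H}_{\bf A}\omega=(i\nabla+{\bf A})\omega=0$.

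It remains to show that the kernel of $\mathscr{H}_{\bf A}$ on $\mathcal{H}_0^1$ is trivial, which then contradicts $\|\omega\|_{L^2}=1$ and finishes the proof. From $(i\nabla+{\bf A})\omega=0$, i.e.\ $\nabla\omega=i{\bf A}\omega$, together with the fact that ${\bf A}$ is real-valued, one computes
$$\nabla|\omega|^2=(\nabla\omega)\bar{\omega}+\omega\,\overline{\nabla\omega}=i{\bf A}|\omega|^2-i{\bf A}|\omega|^2=0\quad\text{a.e. in }\Omega.$$
Moreover, approximating $\omega$ in $H^1$ by $\phi_k\in\mathscr{D}(\Omega)$ (legitimate since $\mathcal{H}_0^1=H_0^1$ by Lemma 2.2 and the definition of $\mathcal{H}_0^1$) shows $\phi_k\bar\phi_k\to|\omega|^2$ in $W^{1,1}(\Omega)$, so $|\omega|^2\in W_0^{1,1}(\Omega)$; extending by zero to $\mathbb{R}^N$ gives a $W^{1,1}(\mathbb{R}^N)$ function with vanishing gradient, hence constant, hence identically zero, so $\omega\equiv 0$ --- the desired contradiction. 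I expect this last step, namely verifying $\ker\big(\mathscr{H}_{\bf A}\big|_{\mathcal{H}_0^1}\big)=\{0\}$, together with the care required to pass to the weak limit in the first-order term, to be the only genuinely delicate points; the compactness itself is handed to us directly by Lemma 2.2.
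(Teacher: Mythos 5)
Your proof is correct, and its overall skeleton --- the compactness--uniqueness contradiction, with Rellich--Kondrachov supplying the strong $L^2$ limit --- is exactly the paper's. Where you genuinely diverge is in the key injectivity step $\ker\big(\mathscr{H}_{\bf A}\big|_{\mathcal{H}_0^1}\big)=\{0\}$. The paper treats $i\partial_{x_j}\omega+a_j\omega=0$ as an overdetermined first-order system, introduces the compatibility (curl) matrix $\Xi_{\bf A}$, and splits into cases: if ${\rm rot}\,{\bf A}\neq0$ only the trivial solution survives, while if ${\rm rot}\,{\bf A}=0$ it writes the explicit solution $\omega=C_0\exp\big(i\int a_k\,ds_k\big)$, deduces $|\omega|$ is constant, and kills the constant via the boundary condition. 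You instead compute $\nabla|\omega|^2=(\nabla\omega)\bar\omega+\omega\overline{\nabla\omega}=0$ directly from $\nabla\omega=i{\bf A}\omega$ and the realness of ${\bf A}$, then push $|\omega|^2$ into $W_0^{1,1}$ and extend by zero. This buys you three things: no case distinction on the magnetic field, no need to integrate the ODE system for a function that is a priori only $H^1$ (the paper's explicit formula is somewhat formal at that regularity), and no implicit connectedness assumption on $\Omega$, since the constancy argument runs on $\mathbb{R}^N$. The only hypothesis you lean on that the paper does not state explicitly is that ${\bf A}$ is real-valued, but the paper's own exponential representation needs the same thing, so this is a shared (and physically automatic) assumption rather than a gap. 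Two minor remarks: your normalization $\|\omega_n\|_{L^2}=1$ versus the paper's $\|v_m\|_{\mathcal{H}^1}=1$ is immaterial; and your weak-limit identification of $\mathscr{H}_{\bf A}\omega=0$ could be replaced, as in the paper, by noting that the minimizing sequence is actually Cauchy in $\mathcal{H}^1$ and hence converges strongly in $\mathcal{H}_0^1$ --- either route closes the argument.
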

\begin{proof}
Assume that
$$\Omega\subset[x_{01},x_{11}]\times[x_{02},x_{12}]\times\cdots\times[x_{0N},x_{1N}].$$
Let us define a semi-norm on $\mathcal{H}^1$, i.e.
$$|\omega|_{\dot{\mathcal{H}}^1}:=\|(i\nabla+{\bf A}) \omega\|_{(L^2)^N}=\sqrt{\displaystyle\sum_{j=1}^{N}\|i\frac{\partial \omega}{\partial x_j}+a_j(x)\omega\|_{L^2}^2}.$$
First we prove that the above semi-norm is actually a norm on
$\mathcal{H}^1_0$. In fact, let $\omega$ be a function from
$\mathcal{H}_0^1(\Omega)$ such that
$|\omega|_{\dot{\mathcal{H}^1}}=0$. Then one has a system of
ordinary differential equations in $\Omega$,
$$\forall j=1,\cdots,N,\ \ i\frac{\partial \omega}{\partial x_j}+a_j(x)\omega=0.$$

Let $\nabla_j$ denote $\frac{\partial}{\partial x_j}$. Here we
introduce an important matrix, i.e. the compatibility matrix
$\Xi_{\bf A}$,
$$\Xi_{\bf A}:=\left(\begin{array}{cccc}\xi_{11}&\xi_{12},&\cdots&\xi_{1N}\\
\xi_{21}&\xi_{22}&\cdots&\xi_{2N}\\
\vdots&\vdots&\cdots&\vdots\\
\xi_{N1}&\xi_{N2}&\cdots&\xi_{NN}\\
\end{array}\right) $$
where
$$\xi_{jk}:=\left|\begin{array}{cc}\nabla_j&\nabla_k\\
a_j&a_k\end{array}\right|.$$ Clearly, $\Xi_{\bf A}$ is an
antisymmetric matrix. In quantum mechanics, $\Xi_{\bf A}\equiv0$
stands for the case without magnetic field, i.e. $${\bf B}={\rm
rot}{\bf A}=0.$$ Once the magnetic field exists, then $\Xi_{\bf
A}\neq0$. Consequently, $\Xi_{\bf A}$ serves as a test matrix for
the magnetic field. If $\Xi_{\bf A}\neq0$, i.e. the magnetic field
exists, then the above system has only the trivial solution
$\omega=0$. Otherwise, if $\Xi_{\bf A}=0$, then there exists a
unique solution represented by $$\
\omega=C_0\exp\Big(i\int_{x_{0k}}^{x_k}a_k(x_1,\cdots,x_{k-1},s_k,x_{k+1},\cdots,x_N)ds_k\Big),\
\ \ \forall k=1,\text{or}\ \cdots,N,$$ where $C_0$ is any complex
constant. Indeed, for $j\neq k$, when $i\neq j$ and $i\neq k$, then
one has
$$\displaystyle \frac{\partial}{\partial x_i}\Big(\int_{x_{0j}}^{x_j}a_jds_j-\int_{x_{0k}}^{x_k}a_kds_k\Big)=0.$$
While for $j\neq k$, when $i$ is equal to any one of them, then we
have the same conclusion. As a result, for $\forall j,k=1,\cdots,N$,
$$\int_{x_{0j}}^{x_j}a_jds_j-\int_{x_{0k}}^{x_k}a_kds_k\equiv C.$$ Immediately, one deducts that $|\omega|$ is a constant in $\Omega$. Notice the fact $\omega\in
\mathcal{H}_0^1$, then one has $\omega=0$ in $\Omega$.\\

Define an equivalent norm in $\mathcal{H}^1$, i.e.
$$\|\omega\|_{\mathcal{H}^1}:= \|\omega\|_{L^2}+|\omega|_{\dot{\mathcal{H}}^1}.$$ We
prove the inequality by contradiction. If there does not exist any
such constant $C(\Omega)$ such that,
$\forall\omega\in\mathcal{H}_0^1$,
$$\|\omega\|_{L^2}\leq C(\Omega)|\omega|_{\dot{\mathcal{H}}^1},$$ then one can
find a sequence $\{\omega_m\}_m$ from $\mathcal{H}_0^1$ such that
$${1}/{m}\|w_m\|_{L^2}>|w_m|_{\dot{\mathcal{H}}^1}.$$ Let
$$v_m\triangleq{w_m}/{\|w_m\|_{\mathcal{H}^1}},$$ then one defines a
sequence $\{v_m\}_m$ from $\mathcal{H}^1_0$ such that
\beq\|v_m\|_{\mathcal{H}^1}=1,\eneq \beq
|v_m|_{\dot{\mathcal{H}}^1}<{1}/{m}.\eneq Since $\Omega$ is bounded
and open, $\Gamma\in C^1$, then the canonic injection from
$\mathcal{H}^1(\Omega)$ to $L^2(\Omega)$ is compact. From $(3)$, one
can extract a subsequence $\{v_\mu\}_\mu$ from the sequence
$\{v_m\}_m$ such that $$v_\mu\to v\ \ \ \text{in}\ \ L^2(\Omega).$$
From $(4)$, one has $$\forall j=1,\cdots,N,\ \ i\frac{\partial
v_\mu}{\partial x_j}+a_j(x)v_\mu\to0\ \ \ \text{in}\ \
L^2(\Omega).$$ Since $\mathcal{H}_0^1$ is complete, then one obtains
$$v_\mu\to v\ \ \ \text{in}\ \ \mathcal{H}_0^1,$$ with
$$\forall j=1,\cdots,N,\ \
i\frac{\partial v}{\partial x_j}+a_j(x)v=0.$$ Accordingly, one has
$$v\equiv0.$$ This is impossible since
$$\|v\|_{\mathcal{H}^1}=\displaystyle\lim_{\mu\to\infty}\|v_\mu\|_{\mathcal{H}^1}=1.$$
This concludes the proof.
\end{proof}
\subsection{$\mathscr{H}_{\bf A}^2$-induced pseudodifferential operators}
According to Lemma 2.5, we introduce an equivalent norm in
$\mathcal{H}_0^1$,
$$\|u\|_{\mathcal{H}^1_0}:=\|\mathscr{H}_{\bf
A}u\|_{(L^2)^N},\ \ \forall u\in \mathcal{H}^1_0.$$ Lemma 2.3
indicates $$(\mathscr{H}_{\bf
A}^2u,v)_{L^2}=(u,v)_{\mathcal{H}^1_0},\ \ \ \forall u\in
\mathcal{H}_0^1\ \ \ \text{such that}\ \mathscr{H}_{\bf A}^2u\in
L^2,\ \ \forall v\in \mathcal{H}_0^1.$$ As is discussed, the
imbeddings $\mathcal{H}_0^1\hookrightarrow L^2$ and
$L^2\hookrightarrow\mathcal{H}^{-1}$ are both dense and compact.
Consequently, $\mathcal{H}_0^1\hookrightarrow \mathcal{H}^{-1}$ is
dense and compact. As a result, it is reasonable to introduce the
duality mapping
$$\mathscr{H}_{\bf A}^2:\mathcal{H}_0^1\to\mathcal{H}^{-1}$$ defined
by $$\langle\mathscr{H}_{\bf
A}^2u,v\rangle_{\mathcal{H}^{-1},\mathcal{H}^1_0}:=(u,v)_{\mathcal{H}^1_0},\
\forall u,v\in \mathcal{H}^1_0. $$ By Riesz-Fr\'{e}chet
representation theorem, it holds that $\mathscr{H}_{\bf A}^2$ is an
isometric isomorphism of $\mathcal{H}_0^1$ onto $\mathcal{H}^{-1}$.
This indicates, $\mathscr{D}(\Omega)$ is also dense in
$\mathcal{H}^{-1}$. Denoting the compact imbedding $$ I:
\mathcal{H}_0^1\to\mathcal{H}^{-1}.$$ Then we define a linear and
compact mapping $$ S\triangleq (\mathscr{H}_{\bf A}^2)^{-1}\circ I:
\mathcal{H}_0^1\to\mathcal{H}_0^1.$$ Furthermore, $S$ is positive
and self-adjoint. Indeed, for $\forall u,v\in\mathcal{H}_0^1$, on
the one hand,
$$(Su,v)_{\mathcal{H}^1_0}=((\mathscr{H}_{\bf
A}^2)^{-1}u,v)_{\mathcal{H}^1_0}=(u,v)_{L^2}.$$ On the other hand,
$$(u,Sv)_{\mathcal{H}^1_0}=(u,(\mathscr{H}_{\bf
A}^2)^{-1}v)_{\mathcal{H}^1_0}=(u,v)_{L^2}.$$ Hence, $$
(Su,v)_{\mathcal{H}^1_0}=(u,Sv)_{\mathcal{H}^1_0}.$$ Applying the
spectral theorem in \cite{MT1}\cite{MT3} to the compact,
self-adjoint and positive linear operator $S$, we conclude that the
spectrum for $\mathscr{H}_{\bf A}^2$ on $\mathcal{H}_0^1$ is
discrete. Here we denote as $\Lambda:=\{\lambda^2_k\}_k$. And the
point spectrum satisfies
$$ 0<\lambda^2_1\leq\lambda^2_2\leq\lambda^2_3\leq\cdots\to\infty,$$ with
finite multiplicity. In addition, there exists an orthogonal system
of complex-valued eigenfunctions
$\{\phi_\lambda(x)\}_{\lambda^2\in\Lambda}$ in $\mathcal{H}^1_0$,
and for each $\lambda^2\in\Lambda$,
$$\|\phi_\lambda\|_{L^2}=1.$$ More importantly, $\{\phi_\lambda(x)\}_{\lambda^2\in\Lambda}$ is dense in $\mathcal{H}_0^1$. Hereafter, we denote by $Z$
the finite combinations of eigenfunctions $\phi_\lambda$. Obviously,
$Z$ is dense in $\mathcal{H}_0^1$.

\begin{rem}
$\{\phi_\lambda(x)\}_{\lambda^2\in\Lambda}$ has orthogonality in
both $L^2$ and $\mathcal{H}^{-1}$. Indeed, for $k\neq l$,$$
0=(\phi_{\lambda_k},\phi_{\lambda_l})_{\mathcal{H}_0^1}=\langle\mathscr{H}_{\bf
A}^2\phi_{\lambda_k},\phi_{\lambda_l}\rangle_{\mathcal{H}^{-1},\mathcal{H}_0^1}=\lambda^2_k(\phi_{\lambda_k},\phi_{\lambda_l})_{L^2}.
$$ While by using the isometric property of $\mathscr{H}_{\bf A}^2$
on $\mathcal{H}_0^1$, one has $$
(\phi_{\lambda_k},\phi_{\lambda_l})_{\mathcal{H}^{-1}}=((\mathscr{H}_{\bf
A}^2)^{-1}\phi_{\lambda_k},(\mathscr{H}_{\bf
A}^2)^{-1}\phi_{\lambda_l})_{\mathcal{H}_0^1}={\lambda_k}^{-2}{\lambda_l}^{-2}(\phi_{\lambda_k},\phi_{\lambda_l})_{\mathcal{H}^{1}_0}=0.$$
\end{rem}
\begin{rem}
Moreover, $\{\phi_\lambda(x)\}_{\lambda^2\in\Lambda}$ is also dense
in both $L^2$ and $\mathcal{H}^{-1}$ since the density of the
imbeddings $$\mathcal{H}_0^1\hookrightarrow
L^2\hookrightarrow\mathcal{H}^{-1}.$$
\end{rem}
\begin{rem}
Let $\Omega=(0,\pi)$, for the Dirichlet operator
$$(i\frac{d}{d x}-1)^2: \mathcal{H}_0^1\to
\mathcal{H}^{-1},$$ it is easy to calculate that
$\{1,2^2,3^2,\cdots,N^2,\cdots\}$ is the set of eigenvalues which
are bounded away from 0. And the associated orthonormal basis (in
the sense of $L^2$-norm) in $\mathcal{H}_0^1$ is
$$\Big\{\sqrt{{2}/{\pi}}\sin(x)e^{-ix},\sqrt{{2}/{\pi}}\sin(2x)e^{-ix},\sqrt{{2}/{\pi}}\sin(3x)e^{-ix},\cdots,\sqrt{{2}/{\pi}}\sin(Nx)e^{-ix},\cdots\Big\}.$$
\end{rem}

With the above notations, one can define the generalized Fourier
transform for $f\in\mathcal{H}^{-1}$ as follows: \beq
\hat{f}(\lambda):=\langle
f,\phi_{\lambda}\rangle_{\mathcal{H}^{-1},\mathcal{H}_0^1}.\eneq And
the corresponding Fourier series, a unique orthogonal expansion in
$\mathcal{H}^{-1}$, is of the form \beq
f(x)=\sum_{\lambda^2\in\Lambda}\hat{f}(\lambda)\phi_\lambda(x),\eneq
with the RHS converging in $\mathcal{H}^{-1}$. Indeed, for $\forall
f\in\mathcal{H}^{-1}$, there is a unique $u_f\in\mathcal{H}^1_0$
such that $\langle
f,v\rangle_{\mathcal{H}^{-1},\mathcal{H}_0^1}=(u_f,v)_{\mathcal{H}_0^1}$
for $\forall v\in\mathcal{H}_0^1$. Then
$$\sum_{\lambda\in\Lambda}|\hat{f}(\lambda)|^2\|\phi_\lambda\|_{\mathcal{H}^{-1}}^2=\sum_{\lambda\in\Lambda}|(u_f,\phi_\lambda)_{\mathcal{H}_0^1}|^2\|\phi_\lambda\|_{\mathcal{H}^{-1}}^2=\sum_{\lambda\in\Lambda}\lambda^2|(u_f,\phi_\lambda)_{L^2}|^2\|\phi_\lambda\|_{L^2}^2<\infty.$$
At the moment, we are ready to introduce the pseudodifferential
operators induced by $\mathscr{H}_{\bf A}^2$.
\begin{defi} Let ${\bf A}\in (C^1(\overline{\Omega}))^N$. Assume that the complex-valued functional $F\in C(\mathbb{R}_+)$ is polynomially bounded. One defines a
generalized linear pseudodifferential operator on $\mathcal{H}_0^1$
as follows: \beq F\Big(\sqrt{\mathscr{H}_{\bf A}^2}\Big)
u(x):=\displaystyle\sum_{\lambda\in\Lambda}F(\lambda)\hat{u}(\lambda)\phi_\lambda(x).\eneq
The sequence $\{F(\lambda)\}_{\lambda^2\in\Lambda}$ is referred to
as the symbol of $F\Big(\sqrt{\mathscr{H}_{\bf A}^2}\Big)$.
\end{defi}
\begin{rem} As a matter of fact, when $f\in \mathcal{H}_0^1$, then
$$\|f\|_{\mathcal{H}_0^1}^2=\displaystyle\sum_{\lambda^2\in\Lambda}|\hat{f}(\lambda)|^2(\phi_\lambda,\phi_\lambda)_{\mathcal{H}_0^1}=\displaystyle\sum_{\lambda^2\in\Lambda}\lambda^2|\hat{f}(\lambda)|^2<\infty,$$
$$\|f\|_{L^2}^2=\displaystyle\sum_{\lambda^2\in\Lambda}|\hat{f}(\lambda)|^2(\phi_\lambda,\phi_\lambda)_{L^2}=\displaystyle\sum_{\lambda^2\in\Lambda}|\hat{f}(\lambda)|^2<\infty,$$
$$\|f\|_{\mathcal{H}^{-1}}^2=\displaystyle\sum_{\lambda^2\in\Lambda}|\hat{f}(\lambda)|^2(\phi_\lambda,\phi_\lambda)_{\mathcal{H}^{-1}}=\displaystyle\sum_{\lambda^2\in\Lambda}{1}/{\lambda^2}|\hat{f}(\lambda)|^2<\infty.$$
\end{rem}
\section{Techniques of microlocal analysis and proof of Theorem 1.1}
\subsection{Useful auxiliary tools from microlocal analysis}
\begin{defi}
The extended phase space is divided into the following low frequency
zone $Z_{low}(M)$, pseudodifferential zone $Z_{pd}(P,M)$ and
$p$-evolution type zone $Z_{pe}(P,M)$: ($M$ and $P$ will be given
later)
\begin{itemize}
\item $Z_{low}(M):=\Big\{(t,\lambda)\in [0,T]\times \{\lambda_1\leq\lambda\leq
M\}\Big\};$
\item $Z_{pd}(P,M):=\Big\{(t,\lambda)\in [0,T]\times \{\lambda\geq
M\}:t\lambda\leq 2^P\nu(t)\Big\};$
\item $Z_{pe}(P,M):=\Big\{(t,\lambda)\in [0,T]\times \{\lambda\geq
M\}:t\lambda\geq 2^P\nu(t)\Big\}.$
\end{itemize}
\end{defi}
\begin{defi} The following classes of symbols are defined in the
$p$-evolution type zone for large frequencies: ($\ell\in\mathbb{N},
(m_1, m_2)\in \mathbb{R}\times\mathbb{R}$)
\begin{eqnarray*}
    &&S^{\ell}\{m_{1},m_{2}\}\\
    &:=&\Big\{a\in C^\ell((0,T]; C^\infty(\lambda\geq M)):
   |D_{t}^{k}D_{\lambda}^{\alpha}a(t,\lambda)|\leq
C_{k,\alpha}\lambda^{m_{1}-\alpha}(\nu(t)/
t)^{m_{2}+k}\\
&& \mbox{for all} \,\,k,\alpha\in\mathbb{N},\ \ k\leq\ell,\,\,
(t,\lambda)\in Z_{pe}(P,M)\Big\}.
\end{eqnarray*}
\end{defi}
\begin{defi}
We define micro-energy in each zone, denoted uniformly as
$V(t,\lambda)=(V_1,V_2)^{T}$.
\begin{itemize}
\item In the low frequency zone
$Z_{low}(M)$, \beq V(t,\lambda):=(\hat{u},D_t\hat{u})^T;\eneq
\item In the pseudo-differential zone $Z_{pd}(P,M)$,
\beq V(t,\lambda):=(\lambda\hat{u},D_t\hat{u})^T;\eneq
\item In the $p$-evolution type zone $Z_{pe}(P,M)$,
\beq V(t,\lambda):=(\lambda b(t) \hat{u},D_t\hat{u})^T.\eneq
\end{itemize}
\end{defi}
\begin{defi}
$t_\lambda$ is defined as the solution of $t\lambda=2^{P}\nu(t)$,
and correspondingly, $t_\lambda\lambda=2^{P}\nu(t_\lambda)$ is
called the separating line in the time-higher frequency part.
\end{defi}
\begin{lem} According to Definition 3.2 and 3.4, in $Z_{pe}(P,M)$ we
have the following symbol calculus properties,
\begin{itemize}
  \item \quad $(\lambda b(t))^{-2}\in S^{2}\{-2,0\}$;
  \item \quad $S^r\{m_1,m_2\} \subset S^r\{m_1+ k,m_2-k\}\ for\ all\  k\in\mathbb{N};$
  \item \quad If $a \in S^r\{m_1,m_2\}$ and $b \in
  S^r\{k_1,k_2\}$, then $ab \in S^r\{m_1+k_1,m_2+k_2\}$;
  \item \quad If $a \in S^r\{m_1,m_2\}$, then $D^{k}_ta \in
  S^{r-k}\{m_1,m_2+k\}$, $D_\lambda^\alpha a \in
  S^r\{m_1-\alpha,m_2\}$;
  \item \quad If $a(t,\lambda)\in S^r\{-1,2\}$, then for $(t,\lambda)\in
Z_{pe}(P,M)$,
$|\int_{t_\lambda}^ta(\tau,\lambda)d\tau|\lesssim\nu(t_\lambda).$
\end{itemize}
\end{lem}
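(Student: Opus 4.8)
The plan is to read each of the five items essentially straight off the definitions, so that only the last one — the integral bound — requires a genuine idea. Throughout I would use freely the defining inequality of the $p$-evolution zone: on $Z_{pe}(P,M)$ one has $t\lambda\ge2^P\nu(t)$, equivalently $\nu(t)/t\le2^{-P}\lambda$, so in particular $\nu(t)/t\le\lambda$.

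For the first item I would factor $(\lambda b(t))^{-2}=\lambda^{-2}\,b(t)^{-2}$, noting that the two factors depend on disjoint variables, so $D_t^kD_\lambda^\alpha\big((\lambda b(t))^{-2}\big)=\big(D_\lambda^\alpha\lambda^{-2}\big)\big(D_t^k b(t)^{-2}\big)=c_\alpha\lambda^{-2-\alpha}\,D_t^k\big(b(t)^{-2}\big)$. Assumption I keeps $b^{-1}$ bounded above and below, and Assumption II gives $|b'|\lesssim\nu(t)/t$ and $|b''|\lesssim(\nu(t)/t)^2$; since $D_t(b^{-2})=-2b^{-3}b'$ and $D_t^2(b^{-2})=6b^{-4}(b')^2-2b^{-3}b''$, one gets $|D_t^k(b^{-2})|\lesssim(\nu(t)/t)^k$ for $k=0,1,2$. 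Hence $|D_t^kD_\lambda^\alpha(\lambda b(t))^{-2}|\lesssim\lambda^{-2-\alpha}(\nu(t)/t)^k$ for $k\le2$, which is exactly membership in $S^2\{-2,0\}$; the $C^2$-in-$t$ and $C^\infty$-in-$\lambda$ regularity is inherited from $b\in C^2(0,T]$ and Assumption I.

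The embedding $S^r\{m_1,m_2\}\subset S^r\{m_1+k,m_2-k\}$ follows from the identity $\lambda^{m_1-\alpha}(\nu/t)^{m_2+k'}=\lambda^{(m_1+k)-\alpha}(\nu/t)^{(m_2-k)+k'}\cdot\big((\nu/t)/\lambda\big)^k$ together with $(\nu/t)/\lambda\le2^{-P}\le1$ on $Z_{pe}(P,M)$. The product item is the Leibniz formula $D_t^{k'}D_\lambda^\alpha(ab)=\sum_{j\le k',\,\beta\le\alpha}\binom{k'}{j}\binom{\alpha}{\beta}\big(D_t^jD_\lambda^\beta a\big)\big(D_t^{k'-j}D_\lambda^{\alpha-\beta}b\big)$, bounding the finitely many terms by the hypotheses on $a$ and $b$ and adding exponents, so that $ab\in S^r\{m_1+k_1,m_2+k_2\}$. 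The differentiation item is the remark that $D_t^{k'}D_\lambda^\alpha(D_t^ka)=D_t^{k'+k}D_\lambda^\alpha a$, which by hypothesis is $\lesssim\lambda^{m_1-\alpha}(\nu/t)^{(m_2+k)+k'}$ as long as $k'\le r-k$, giving $D_t^ka\in S^{r-k}\{m_1,m_2+k\}$; and likewise $D_t^{k'}D_\lambda^{\beta}(D_\lambda^\alpha a)=D_t^{k'}D_\lambda^{\alpha+\beta}a$ merely shifts the $\lambda$-order by $\alpha$, giving $D_\lambda^\alpha a\in S^r\{m_1-\alpha,m_2\}$.

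The only real content is the last item, and there the point where one can easily go wrong is the bookkeeping of the powers of $\lambda$ and $\nu$. Fix $(t,\lambda)\in Z_{pe}(P,M)$. Since $\nu$ is strictly decreasing, $\tau\mapsto\tau\lambda-2^P\nu(\tau)$ is strictly increasing, so $\{\tau:\tau\lambda\ge2^P\nu(\tau)\}=[t_\lambda,T]$; hence the whole segment $[t_\lambda,t]$ lies in $Z_{pe}(P,M)$ and there the estimate $|a(\tau,\lambda)|\le C\lambda^{-1}(\nu(\tau)/\tau)^2$ defining $S^r\{-1,2\}$ is available. I would resist replacing $\lambda^{-1}$ by $\tau/(2^P\nu(\tau))$ at this stage: that leaves $\int_{t_\lambda}^t\nu(\tau)/\tau\,d\tau$, which for $\nu(\tau)\sim\log(1/\tau)$ is of size $\nu(t_\lambda)^2$ — one power of $\nu(t_\lambda)$ too much. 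Instead I would keep $\lambda^{-1}$, use monotonicity $\nu(\tau)\le\nu(t_\lambda)$ on $[t_\lambda,t]$, and integrate $\tau^{-2}$ directly:
\[
\Big|\int_{t_\lambda}^t a(\tau,\lambda)\,d\tau\Big|\le C\lambda^{-1}\nu(t_\lambda)^2\int_{t_\lambda}^t\tau^{-2}\,d\tau\le C\,\lambda^{-1}\,\nu(t_\lambda)^2\,t_\lambda^{-1}.
\]
Only at the very end do I invoke the defining relation of the separating time, $t_\lambda\lambda=2^P\nu(t_\lambda)$, i.e. $t_\lambda^{-1}=\lambda/(2^P\nu(t_\lambda))$; substituting cancels the factor $\lambda^{-1}$ and one power of $\nu(t_\lambda)$, leaving $\tfrac{C}{2^P}\,\nu(t_\lambda)\lesssim\nu(t_\lambda)$, as claimed. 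I expect no obstacle beyond keeping this order of operations straight.
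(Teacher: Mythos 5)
Your proposal is correct and, for the only item the paper actually proves (the integral bound), it follows exactly the paper's chain $\bigl|\int_{t_\lambda}^{t}a(\tau,\lambda)\,d\tau\bigr|\lesssim\int_{t_\lambda}^{t}\nu^{2}(\tau)/(\lambda\tau^{2})\,d\tau\lesssim\nu^{2}(t_\lambda)/(t_\lambda\lambda)\lesssim\nu(t_\lambda)$, using monotonicity of $\nu$ and the defining relation $t_\lambda\lambda=2^{P}\nu(t_\lambda)$ in the same order. The first four items, which the paper dismisses without proof, you verify correctly by routine computation.
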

\begin{proof}
We only prove the last statement. Actually,
$$\Big|\int_{t_\lambda}^{t}a(\tau,\lambda)d\tau\Big|
\lesssim\int_{t_\lambda}^{t}{\nu^2(\tau)}/
{(\lambda\tau^{2})}d\tau\lesssim\nu^2(t_\lambda)/(t_\lambda\lambda)\lesssim\nu(t_\lambda).$$
\end{proof}

\subsection{Estimates in $Z_{low}(M)$ and $Z_{pd}(P,M)$}
For the hyperbolic magnetic Schr\"{o}dinger equation (1), the
treatments in $Z_{low}(M)$ and $Z_{pd}(P,M)$ are essentially the
same.
\begin{lem} For all $(t,\lambda)\in Z_{low}(M)$, we have the following energy
estimate:
\beq\Big|\left(\begin{array}{c}\hat{u}(t,\lambda)\\
D_t\hat{u}(t,\lambda)\end{array}\right)\Big|\lesssim|\hat{u}_0(\lambda)|+|\hat{u}_1(\lambda)|.\eneq
\end{lem}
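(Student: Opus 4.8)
The plan is to reduce the estimate to a Gronwall-type argument for the ordinary differential system satisfied by the microlocalized energy $V(t,\lambda)=(\hat u,D_t\hat u)^T$ in the low frequency zone. Taking the generalized Fourier transform of equation (1) in the space variable and using that $\phi_\lambda$ is an eigenfunction of $\mathscr{H}_{\bf A}^2$ with eigenvalue $\lambda^2$, one obtains the scalar second-order ODE $\hat u_{tt}+b^2(t)\lambda^2\hat u=0$, with initial data $\hat u(0,\lambda)=\hat u_0(\lambda)$, $D_t\hat u(0,\lambda)=D_t\hat u_1(\lambda)$. Writing this in first-order form for $V=(\hat u,D_t\hat u)^T$ gives $D_tV=\mathcal{A}(t,\lambda)V$ with a matrix whose entries are controlled by $\lambda$ and $b(t)$; the point is that in $Z_{low}(M)$ we have $\lambda_1\le\lambda\le M$, so $\mathcal{A}$ is bounded by a constant depending only on $M$, $C_1$, $C_2$ and $T$ (using Assumption I to bound $b$ from above).

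The key steps, in order, are: (i) derive the scalar ODE $\hat u_{tt}+b^2(t)\lambda^2\hat u=0$ from (8)–(9) and the spectral decomposition; (ii) rewrite it as the first-order system $D_tV=\mathcal{A}V$ and observe $\|\mathcal{A}(t,\lambda)\|\lesssim_M 1$ uniformly for $(t,\lambda)\in Z_{low}(M)$, using $b(t)\le C_2$; (iii) apply Gronwall's inequality to get $|V(t,\lambda)|\le e^{C_M t}|V(0,\lambda)|\lesssim_M |V(0,\lambda)|$ for $t\in[0,T]$; (iv) note $|V(0,\lambda)|=|(\hat u_0(\lambda),D_t\hat u_1(\lambda))|\lesssim|\hat u_0(\lambda)|+|\hat u_1(\lambda)|$, which yields (14). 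Alternatively, instead of Gronwall one may use the energy identity: differentiating $E(t):=|D_t\hat u|^2+b^2(t)\lambda^2|\hat u|^2$ gives $E'(t)=2b(t)b'(t)\lambda^2|\hat u|^2$, and since $|b'(t)|\le C_3\nu(t)/t$ on $(0,T]$ this is bounded by $(C/t)E(t)$, which is not integrable near $0$; so in the low frequency zone the cleaner route is genuinely the crude Gronwall estimate on the bounded matrix $\mathcal{A}$, where no singular factor appears because $\lambda\le M$ keeps everything finite and the $b'$ term never enters the first-order system.

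The main obstacle is essentially bookkeeping rather than analysis: one must be careful that the constant in (14) is allowed to depend on $M$ (and on $C_1,C_2,T$) but not on $\lambda$ or $t$, which is exactly what the zone restriction $\lambda\le M$ guarantees; and one should check that the ODE is well-posed on all of $[0,T]$ including $t=0$, which follows since $b\in C^2(0,T]$ together with Assumption I gives $b^2(t)\lambda^2$ continuous and bounded up to $t=0$ after extending $b$ continuously (indeed $\nu$ decreasing and bounded below forces $\mu(t)=t/\nu(t)\to 0$, and Assumption I already bounds $b$). Once these points are noted, the estimate follows immediately from the standard linear ODE bound, and no microlocal machinery from Section 3.1 is actually required in this zone — that is reserved for $Z_{pd}$ and especially $Z_{pe}$.
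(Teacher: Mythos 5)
Your proposal is correct and follows essentially the same route as the paper: reduce to the scalar ODE $D_t^2\hat u-\lambda^2b^2(t)\hat u=0$, pass to the first-order system $D_tV=\mathscr{A}(t,\lambda)V$, observe that $\|\mathscr{A}\|\le C(M)$ on $Z_{low}(M)$ by Assumption I, and conclude with the exponential bound $\exp\bigl(\int_0^T\|\mathscr{A}\|\,dr\bigr)\le C(M,T)$ --- the paper obtains this bound by the matrizant (Peano--Baker) series rather than by Gronwall, but the two are interchangeable here. Your side remarks (the $b'$ term never enters the system, and the energy-identity route would fail because of the singular factor $\nu(t)/t$) are accurate and consistent with why the paper uses the crude bound in this zone.
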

\begin{proof}
Apply the partial Fourier transform, and we have the following
equation \beq D_t^2\hat{u}(t,\lambda)-\lambda^2
b^2(t)\hat{u}(t,\lambda)=0.\eneq Next, we study the system of first
order
\beq D_tV=\mathscr{A}(t,\lambda)V:=\left(\begin{array}{cc} 0 & 1\\
\lambda^2 b^2(t) & 0
\end{array}\right) V.\eneq
In fact, the method of successive approximation enables us to
construct the fundamental solution of the system \beq
D_{t}\mathscr{E}(t,s,\lambda)=\mathscr{A}(t,\lambda)\mathscr{E}(t,s,\lambda),\
\ \ \ \mathscr{E}(s,s,\lambda)=I.\eneq More precisely,
$\mathscr{E}(t,s,\lambda)$ is given in the form of matrizant
representation: \beq
\mathscr{E}(t,s,\lambda)=I+\sum_{k=1}^{\infty}i^{k}\int_s^t\mathscr{A}(t_{1},\lambda)
\int_s^{t_{1}}\mathscr{A}(t_{2},\lambda)\cdots\int_s^{t_{k-1}}\mathscr{A}(t_{k},\lambda)dt_{k}\cdots
dt_{1}.\eneq Actually, we have
\begin{lem} For $k\in \mathbb{N}_+$, it holds $$\Big\|\int_s^t\mathscr{A}(t_{1},\lambda)
\int_s^{t_{1}}\mathscr{A}(t_{2},\lambda)\cdots\int_s^{t_{k-1}}\mathscr{A}(t_{k},\lambda)dt_{k}\cdots
dt_{1}\Big\|\leq{1}/{k!}\Big(\int_s^t\| \mathscr{A}(r,\lambda)\|
dr\Big)^k.$$
\end{lem}
Indeed,
\begin{eqnarray*}&&\int_s^t\|\mathscr{A}(t_{1},\lambda)\|\int_s^{t_{1}}
\|\mathscr{A}(t_{2},\lambda)\|dt_2dt_1 \\
\\
&=& \int_s^t \frac{\partial}{\partial t_{1}}\Big(\int_s^{t_{1}}\|
\mathscr{A}(t_{2},\lambda)\| dt_{2}\Big)\Big(\int_s^{t_{1}}\|
\mathscr{A}(t_{2},\lambda)\| dt_2\Big)dt_1\\
\\
&=& {1}/{2}\int_s^t\frac{\partial}{\partial
t_{1}}\Big(\int_s^{t_1}\| \mathscr{A}(t_{2},\lambda)\|
dt_2\Big)^2dt_1 ={1}/{2}\Big(\int_s^t\| \mathscr{A}(r,\lambda)\|
dr\Big)^2.
\end{eqnarray*}
By induction method, the statement follows immediately.
Consequently, by applying the definition of the zone $Z_{low}(M)$,
we have
\beq\|\mathscr{E}(t,s,\lambda)\|\leq\exp\Big(\int_s^t\|\mathscr{A}(r,\lambda)\|
dr\Big)\leq\exp\Big(\int_0^{T} C(M)ds\Big)\leq C(M,T).\eneq And this
estimate leads to the final conclusion.
\end{proof}
\begin{rem}
The constant $M$ is chosen in order to separate large frequencies
from lower frequencies. Actually, we can choose any large constant
as M. As a matter of fact, the small frequencies play an
insignificant role in the discussion of loss of regularity, and it
is reasonable to neglect the influence from the electric field ${\bf
E}$.
\end{rem}
Now we sketch the discussion in $Z_{pd}(P,M)$.
\begin{lem}
For all $(t,\lambda)\in Z_{pd}(P,M)$, we have the following energy
estimate with a positive constant $c_1$ depending upon $P$ and $M$:
$$\Big|\left(\begin{array}{c}\lambda\hat{u}(t,\lambda)\\
D_t\hat{u}(t,\lambda)\end{array}\right)\Big|\lesssim\exp\left(c_1\nu(t_\lambda)\right)\left(\lambda|\hat{u}_0(\lambda)|+|\hat{u}_1(\lambda)|\right).$$
\end{lem}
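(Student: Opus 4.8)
The plan is to mirror the proof of Lemma 3.6, replacing the crude low frequency bound $\|\mathscr{A}(r,\lambda)\|\le C(M)$ by the zone-adapted control available in $Z_{pd}(P,M)$. After the partial Fourier transform one works again with the scalar equation $D_t^2\hat u-\lambda^2 b^2(t)\hat u=0$ from the proof of Lemma 3.6, but now with the micro-energy $V=(\lambda\hat u,D_t\hat u)^T$ of Definition 3.3. A direct computation gives the first order system $D_tV=\mathscr{A}_{pd}(t,\lambda)V$ with
$$\mathscr{A}_{pd}(t,\lambda)=\begin{pmatrix}0&\lambda\\ \lambda b^2(t)&0\end{pmatrix}=\lambda\begin{pmatrix}0&1\\ b^2(t)&0\end{pmatrix},$$
whose spectral norm, by Assumption I, obeys $\|\mathscr{A}_{pd}(t,\lambda)\|\le\max\{1,C_2^2\}\,\lambda$ uniformly in $t\in(0,T]$. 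Note that only the boundedness of $b$ enters here, so Assumption II plays no role in $Z_{pd}(P,M)$.

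I would then construct the fundamental solution $\mathscr{E}_{pd}(t,s,\lambda)$ of $D_t\mathscr{E}_{pd}=\mathscr{A}_{pd}\mathscr{E}_{pd}$, $\mathscr{E}_{pd}(s,s,\lambda)=I$, via the matrizant series exactly as in the proof of Lemma 3.6. The combinatorial estimate of Lemma 3.7 for the iterated integrals uses only the ordering of the integration variables and submultiplicativity of the operator norm, so it carries over verbatim and gives
$$\|\mathscr{E}_{pd}(t,s,\lambda)\|\le\exp\!\Big(\int_s^t\|\mathscr{A}_{pd}(r,\lambda)\|\,dr\Big)\le\exp\!\big(\max\{1,C_2^2\}\,\lambda(t-s)\big).$$
It remains to bound $\lambda t$ throughout the zone. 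Choosing $M$ large (which is permitted by Remark 3.8) so that $MT>2^P\nu(T)$, the equation $t\lambda=2^P\nu(t)$ of Definition 3.4 has its root $t_\lambda$ inside $(0,T)$ for every $\lambda\ge M$; since $\nu$ is strictly decreasing, $t\mapsto t\lambda-2^P\nu(t)$ is strictly increasing, hence on $\{\lambda\ge M\}$ the zone $Z_{pd}(P,M)$ is precisely the initial layer $\{0\le t\le t_\lambda\}$, which in particular reaches $t=0$. Consequently $\lambda t\le\lambda t_\lambda=2^P\nu(t_\lambda)$ for all $(t,\lambda)\in Z_{pd}(P,M)$, and with $c_1:=2^P\max\{1,C_2^2\}$ one obtains $\|\mathscr{E}_{pd}(t,0,\lambda)\|\le\exp\!\big(c_1\nu(t_\lambda)\big)$.

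Finally, propagating from the initial time, $V(0,\lambda)=(\lambda\hat u_0(\lambda),D_t\hat u(0,\lambda))^T$ with $|D_t\hat u(0,\lambda)|=|\hat u_1(\lambda)|$, so that $|V(0,\lambda)|\le\lambda|\hat u_0(\lambda)|+|\hat u_1(\lambda)|$; then $V(t,\lambda)=\mathscr{E}_{pd}(t,0,\lambda)V(0,\lambda)$ together with the bound above yields the claim. I do not expect a genuine obstacle here: the argument is a routine variant of Lemma 3.6, and the essential point --- that the a priori large factor $\exp(\lambda t)$ is tamed down to the loss factor $\exp(c_1\nu(t_\lambda))$ --- is built into the very definition of $Z_{pd}(P,M)$. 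The only steps deserving a line of care are the monotonicity argument that pins down $t_\lambda$ and the consistency with Theorem 1.1: since $\mu(t)=t/\nu(t)$ one has $\mu(t_\lambda)=2^P/\lambda$, i.e. $t_\lambda=\mu^{-1}(2^P/\lambda)$, whence $\exp(c_1\nu(t_\lambda))=\exp\!\big(c_1\nu(\mu^{-1}(2^P/\lambda))\big)$ is exactly the Fourier-side weight appearing in the main theorem.
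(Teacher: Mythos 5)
Your proof is correct and is essentially the paper's own argument: the same first-order system $D_tV=\mathscr{B}(t,\lambda)V$ with $\mathscr{B}=\begin{pmatrix}0&\lambda\\ \lambda b^2(t)&0\end{pmatrix}$, the same matrizant series with the combinatorial bound of Lemma 3.7, and the same reduction $\lambda t\le\lambda t_\lambda=2^P\nu(t_\lambda)$ coming straight from the definition of $t_\lambda$ (with $2^P$ absorbed into $c_1$). Your extra remarks — that only Assumption~I is used in $Z_{pd}$, the monotonicity argument pinning down $t_\lambda$, and the identity $t_\lambda=\mu^{-1}(2^P/\lambda)$ matching the weight in Theorem~1.1 — are accurate and merely make explicit what the paper leaves implicit.
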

\begin{proof}
We study the system of first order
$$D_tV=\mathscr{B}(t,\lambda)V:=\left(\begin{array}{cc} 0 & \lambda\\
\lambda b^2(t) & 0
\end{array}\right) V.$$
Similarly, we construct the fundamental solution of the system
$$D_{t}\mathscr{E}(t,s,\lambda)=\mathscr{B}(t,\lambda)\mathscr{E}(t,s,\lambda),\
\ \ \ \mathscr{E}(s,s,\lambda)=I.$$ More precisely,
$\mathscr{E}(t,s,\lambda)$ is given in the form of matrizant
representation
$$\mathscr{E}(t,s,\lambda)=I+\sum_{k=1}^{\infty}i^{k}\int_s^t\mathscr{B}(t_{1},\lambda)
\int_s^{t_{1}}\mathscr{B}(t_{2},\lambda)\cdots\int_s^{t_{k-1}}\mathscr{B}(t_{k},\lambda)dt_{k}\cdots
dt_{1}.$$ By the induction method as in the proof of Lemma 3.7, one
has
$$\|\mathscr{E}(t,s,\lambda)\|\leq\exp\Big(\int_s^t\|\mathscr{B}(r,\lambda)\| dr\Big)\leq\exp\Big(\int_0^{t_\lambda}
c_1\lambda ds\Big)\leq\exp\big(c_1\nu(t_\lambda)\big).$$ The final
inequality holds when we take account of the definition of
$t_\lambda$.
\end{proof}
\subsection{Estimates in $Z_{pe}(P,M)$}
\begin{lem}
For all $(t,\lambda)\in Z_{pe}(P,M)$, we have the following energy
estimate with a positive constant $c_1$ depending upon $P$, $M$:
\beq\Big|\left(\begin{array}{c}\lambda b(t)\hat{u}(t,\lambda)\\
D_t\hat{u}(t,\lambda)\end{array}\right)\Big|\lesssim\exp\left(c_1\nu(t_\lambda)\right)
\Big(\lambda
b(t_\lambda)|\hat{u}(t_\lambda,\lambda)|+|D_t\hat{u}(t_\lambda,\lambda)|\Big).\eneq
\end{lem}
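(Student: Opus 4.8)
The plan is to follow the standard microlocal diagonalization scheme for hyperbolic equations with oscillating coefficients, carried out inside the $p$-evolution zone, where the separating time $t_\lambda$ of Definition~3.4 is the entry point: for fixed large $\lambda$ the condition $t\lambda\geq 2^P\nu(t)$ is exactly $t\geq t_\lambda$, so that the estimate is to be propagated from $t_\lambda$ to $t$. Passing to the partial Fourier coefficient, $\hat u(t,\lambda)$ solves (14), and differentiating the $Z_{pe}$-microenergy $V=(\lambda b(t)\hat u,D_t\hat u)^T$ one gets
\[
D_tV=\mathcal{A}(t,\lambda)V,\qquad \mathcal{A}(t,\lambda)=\lambda b(t)\begin{pmatrix}0&1\\1&0\end{pmatrix}+\frac{D_tb(t)}{b(t)}\begin{pmatrix}1&0\\0&0\end{pmatrix},
\]
where, by Assumptions I and II together with $b\in C^2$, the lower order coefficient satisfies $|D_t^k(D_tb/b)|\lesssim(\nu(t)/t)^{k+1}$ for $k=0,1$, so the remainder matrix belongs to $S^1\{0,1\}$ in the sense of Definition~3.2. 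I would stress at this point that a crude Gronwall estimate on $|V|^2$ only yields the factor $\exp(c\int_{t_\lambda}^t|b'|/b\,d\tau)$, and $\int_{t_\lambda}^t\nu(\tau)/\tau\,d\tau$ is in general of size $\nu(t_\lambda)^2$, which is too lossy; the improvement to the sharp exponent $\nu(t_\lambda)$ is precisely what diagonalization buys.

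Next I would conjugate by the constant matrix $M=\begin{pmatrix}1&1\\1&-1\end{pmatrix}$, which diagonalizes $\begin{pmatrix}0&1\\1&0\end{pmatrix}$; writing $V=MW$ the system becomes $D_tW=(\lambda b(t)\,\mathrm{diag}(1,-1)+R_0)W$ with $R_0=\frac{D_tb}{2b}\begin{pmatrix}1&1\\1&1\end{pmatrix}\in S^1\{0,1\}$. The diagonal part of $R_0$ is the scalar $\frac{D_tb}{2b}I$, and since $D_tb/b=-ib'/b$ it contributes upon integration only the bounded factor $(b(t)/b(t_\lambda))^{1/2}$, controlled by Assumption~I, hence it is harmless. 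To remove the off-diagonal part I set $W=(I+N_1)W^{(1)}$ with $N_1$ the off-diagonal matrix solving $[\lambda b(t)\,\mathrm{diag}(1,-1),N_1]=-(\text{off-diagonal part of }R_0)$; using the spectral gap $2\lambda b$ and $(\lambda b)^{-1}\in S^2\{-1,0\}$ (a consequence of Lemma~3.5) one gets $N_1\in S^1\{-1,1\}$, and crucially, on $Z_{pe}(P,M)$ one has $\|N_1(t,\lambda)\|\lesssim\nu(t)/(t\lambda)\leq 2^{-P}$, so that for $P$ fixed large enough $I+N_1$ is uniformly invertible with $\|(I+N_1)^{\pm1}\|\leq 2$. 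A routine computation (collecting $D_tN_1$, $R_0N_1$ and the $((I+N_1)^{-1}-I)$-corrections) then gives
\[
D_tW^{(1)}=\Big(\lambda b(t)\,\mathrm{diag}(1,-1)+\frac{D_tb(t)}{2b(t)}I+R_1(t,\lambda)\Big)W^{(1)},\qquad R_1\in S^0\{-1,2\}.
\]
One corrector step suffices exactly because $b\in C^2$: only $R_1\in C^0$ is available, which is all that is needed in the last step.

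For the diagonalized system I would factor out the exactly commuting diagonal part $\Phi(t)=\exp\big(i\int_{t_\lambda}^t(\lambda b(\tau)\,\mathrm{diag}(1,-1)+\frac{D_tb(\tau)}{2b(\tau)}I)\,d\tau\big)$, whose $\lambda b$-component is unimodular and whose scalar component equals $(b(t)/b(t_\lambda))^{1/2}$, so that $\|\Phi(t)^{\pm1}\|\leq(C_2/C_1)^{1/2}$. Writing $W^{(1)}=\Phi\widetilde W$ reduces the system to $D_t\widetilde W=(\Phi^{-1}R_1\Phi)\widetilde W$, and the matrizant estimate of the type used in Lemma~3.7 gives $\|\widetilde{\mathcal{E}}(t,t_\lambda,\lambda)\|\leq\exp\big((C_2/C_1)\int_{t_\lambda}^t\|R_1(\tau,\lambda)\|\,d\tau\big)$. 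Since $R_1\in S^0\{-1,2\}$ means $\|R_1(\tau,\lambda)\|\lesssim\nu^2(\tau)/(\lambda\tau^2)$, the last bullet of Lemma~3.5 yields $\int_{t_\lambda}^t\|R_1(\tau,\lambda)\|\,d\tau\lesssim\nu(t_\lambda)$, so the propagator of the $W^{(1)}$-system is bounded by $C\exp(c_1\nu(t_\lambda))$ with $c_1$ depending only on $P$, $M$ and the structural constants $C_1,\dots,C_4$. Undoing the two conjugations $V=M(I+N_1)W^{(1)}$, with all four factors $M^{\pm1}$, $(I+N_1)^{\pm1}$ uniformly bounded, transfers this bound to $V$ and gives precisely the claimed inequality (15), after noting that $|V|$ is equivalent to the $\ell^1$-combination on the right-hand side.

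The only genuinely delicate point is the diagonalization bookkeeping in the second step: one has to verify that $N_1$ is uniformly small on $Z_{pe}(P,M)$ — which is the very reason the zone is cut at the threshold $t\lambda\geq 2^P\nu(t)$ and why $P$ is taken large — and that a single corrector step already lands the remainder in $S^0\{-1,2\}$, so that the integral estimate of Lemma~3.5 applies and produces the exponent $\nu(t_\lambda)$ rather than $\nu(t_\lambda)^2$. Everything else reduces to the elementary Gronwall and matrizant arguments already carried out in Lemmas~3.7 and~3.10.
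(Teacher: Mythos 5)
Your proof is correct and follows essentially the same route as the paper: a constant first diagonalization of the principal part, a normal-form corrector step of the form $I+N^{(1)}$ with $N^{(1)}\in S^1\{-1,1\}$ made uniformly small by the zone threshold $t\lambda\geq 2^P\nu(t)$, factoring out the diagonal propagator (unimodular up to the bounded $(b(t)/b(t_\lambda))^{1/2}$ factor), and a matrizant estimate combined with the last item of Lemma~3.5 to turn the $S^0\{-1,2\}$ remainder into the $\exp(c_1\nu(t_\lambda))$ bound before undoing the conjugations. The only cosmetic differences are the sign convention in the diagonalizer and your added remark that a naive Gronwall bound would produce $\nu(t_\lambda)^2$, which is a useful explanation but not part of the paper's argument.
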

\begin{proof}
The whole process is based on the application of two steps of
diagonalisation procedure and the construction of fundamental
solution. Taking account of the definition of micro-energy in this
zone, we study the following first order system:
\beq \begin{array}{lll}D_tV&=&\left(\begin{array}{cc} 0 & \lambda b(t)\\
\lambda b(t) & 0
\end{array}\right)V+
\left(\begin{array}{cc}D_tb(t)/b(t) & 0 \\
0 & 0
\end{array}\right) V.\end{array}\eneq
{\it Step 1: First step of diagonalisation}\\
Choose the diagonalizer:

\beq\mathscr{M}\:=\left(\begin{array}{cc} 1 & -1\\
1 & 1
\end{array}\right).\eneq
It is evident that $\mathscr{M}^{-1}$ exists. Through the transform
$V=\mathscr{M}V_{0}$, we obtain
\[
  \begin{array}{lcl}\displaystyle
0&=&D_tV_0-\mathscr{D}V_0+B_1V_0+B_2V_0\\
\\
&:=&D_tV_0-\left(\begin{array}{cc} \lambda b(t) & 0\\
0 & -\lambda b(t)  \end{array}\right) V_0\\
\\
&&-{1}/{2}\left(\begin{array}{cc} D_tb(t)/b(t)&  -D_tb(t)/b(t)\\
-D_tb(t)/b(t) &
 D_tb(t)/b(t)\\
\end{array}\right) V_0
  \end{array}
\]
where $\mathscr{D}\in S^{2}\{1,0\}$ and $B\:=
B_1+B_2\in S^{1}\{0,1\}$.\\
\\
{\it Step 2: Second step of diagonalisation}\\
\\
To carry out this step of diagonalisation, or the so-called normal
form diagonalisation, we follow the procedure of asymptotic theory
of differential equations. Namely, we construct an invertible matrix
$N_{1}(t,\xi):= I + N^{(1)}(t,\lambda)$. Define $N^{(0)}:= I,
\,B^{(0)}:= B, \,F^{(0)}:= \text{diag}(B^{(0)})$,
\[
  \begin{array}{ll}
   N_{qr}^{(1)}&:={B_{qr}^{(0)}}/({\tau_{q}-\tau_{r}}),
   q\neq r;\ N_{qq}^{(1)}:=0, \,\,\tau_{k}=(-1)^{k+1}\lambda b(t),\,\,k=1,2;\\
&\\
   B^{(1)}&:=(D_{t}-\mathscr{D}+B)(I+N^{(1)})-(I+N^{(1)})
   (D_{t}-\mathscr{D}+F^{(0)}).
  \end{array}
\]
According to the properties of the symbol calculus, $N^{(1)}\in
S^{1}\{-1,1\}$ and $F^{(0)}\in S^{1}\{0,1\}$. As for $B^{(1)}$, we
obtain the following relation: \beq
B^{(1)}=B+[N^{(1)},\mathscr{D}]-F^{(0)}+D_{t}N^{(1)}+BN^{(1)}-N^{(1)}F^{(0)}.\eneq
The construction principle implies that the sum of the first three
terms vanishes, hence $B^{(1)}\in S^{0}\{-1,2\}$. Finally, let \beq
R_{1}:=
N_{1}^{-1}B^{(1)}=N_{1}^{-1}\Big((D_{t}-\mathscr{D}+B)(I+N^{(1)})-(I+N^{(1)})
(D_{t}-\mathscr{D}+F^{(0)})\Big).\eneq By virtue of the calculus of
generalized symbols, this definition means $R_{1}\in S^{0}\{-1,2\}$.
In addition, due to the definition of $Z_{pe}(P,M)$, then
$N^{(1)}\in S^{1}\{-1,1\}$ indicates $|N_{qr}^{(1)}|\leq C/2^P$.
Consequently, an appropriate integer $P$ assures that
$\|N_{1}-I\|<1/2$ in $Z_{pe}(P,M)$, which implies the invertibility
of $N_{1}$. As a result, we have the following system after the
second step of diagonalisation:
\beq(D_t-\mathscr{D}+B)N_1=N_1(D_t-\mathscr{D}+F^{(0)}+R_1),\
\text{where}\ \ R_{1}\in S^{0}\{-1,2\}.\eneq
{\it Step 3: Estimate of the fundamental solution}\\
\\
We apply the transform $V_0=N_1V_1$ and consider the system \beq
(D_t-\mathscr{D}+F^{(0)}+R_1)V_1=0. \eneq The fundamental solution
for this equation is $\mathscr{E}=\mathscr{E}_1\mathscr{H}$, where
$\mathscr{E}_1(t,s,\lambda)$ has the following form,

\beq\left\{\begin{array}{ll}
\mathscr{E}_1(t,s,\lambda)^{(11)}=&\exp\Big(\displaystyle
i\int_{s}^{t}\lambda b(\tau)
d\tau+{1}/{2}\int_s^tb'(\tau)/b(\tau)d\tau\Big)\\
\\
\mathscr{E}_1(t,s,\lambda)^{(22)}=&\exp\Big(\displaystyle-i\int_{s}^{t}\lambda
b(\tau)
d\tau+{1}/{2}\int_s^tb'(\tau)/b(\tau)d\tau\Big)\\
\\
\mathscr{E}_1(t,s,\lambda)^{(12)}=&\mathscr{E}_1(t,s,\lambda)^{(21)}=0.
\end{array}\right.\eneq
Furthermore, $\mathscr{H}(t,s,\lambda)$ satisfies \beq
D_{t}\mathscr{H}+\underbrace{\mathscr{E}_1(s,t,\lambda)R_1(t,\lambda)\mathscr{E}_1(t,s,\lambda)}_{\widetilde{R_1}(t,s,\lambda)}\mathscr{H}=0,\
\ \mathscr{H}(s,s,\lambda)=I.\eneq Since \beq
\|\mathscr{E}_1(t,s,\lambda)\|\leq C,\eneq for all
$s,t\in[t_\lambda,T]$, then by applying the same estimation
procedure as in $Z_{pd}(P,M)$, we have \beq\|
\mathscr{H}(t,t_\lambda,\lambda)\|\leq\exp\Big({\int_{t_\lambda}^{t}\|
\widetilde{R_1}(\tau,t_\lambda,\lambda)\| d\tau}\Big)\leq
\exp(c_1\nu(t_\lambda)).\eneq Therefore, we conclude the estimate
for the fundamental solution: \beq
\|\mathscr{E}(t,t_\lambda,\lambda)\|=\|
\mathscr{E}_1\mathscr{H}\|\lesssim\exp(c_1\nu(t_\lambda)). \eneq
Using the invertibility of $\mathscr{M}$, $N_1$, and the two
transforms\beq
V_1(t,\lambda)=\mathscr{E}(t,t_{\lambda},\lambda)V_1(t_{\lambda},\lambda),\
\ \ \ V_0=N_1V_1,V=\mathscr{M}V_0,\eneq we transform
$V_1(t,\lambda)$ back to the original micro-energy $V(t,\lambda)$
and obtain \beq
\|V(t,\lambda)\|\lesssim\exp(c_1\nu(t_\lambda))\|V(t_{\lambda},\lambda)\|.
\eneq
\end{proof}
Combining the estimates in Lemma 3.6, 3.9 and 3.10, one gets the
following energy estimate in $[0,T]\times\{\lambda\geq M\}$ with a
positive constant $c_1$:
\beq\Big|\left(\begin{array}{c}\lambda\hat{u}(t,\lambda)\\
D_t\hat{u}(t,\lambda)\end{array}\right)\Big|\lesssim\exp\left(c_1\nu(t_\lambda)\right)\left(\lambda|\hat{u}_0(\lambda)|+|\hat{u}_1(\lambda)|\right).\eneq
Taking into account the definition of $t_\lambda$, generalized
pseudodifferential operators and Plancherel theorem, we arrive at
the statement of Theorem 1.1 immediately.
\section{Optimality of the loss of regularity}
In this section, we discuss the optimality of our estimates in
Theorem 1.1. The method of instability argument to be used was
developed in \cite{MC1} to show that a Log-type loss really appears
for hyperbolic Cauchy problems. Now we further develop this idea to
demonstrate that the precise $\nu$-loss of derivatives really
appears for the magnetic Schr\"{o}dinger equation. Let us consider
the Cauchy problem in $[0,T]\times\Omega$, ($\Omega=(0,2\pi)$, ${\bf
A}(x)=a(x)$) \beq\partial_t^2u+b^2(t)(i\partial_x+a(x))^2u=0,\ \
u(0,x)=u_0(x),\ \
\partial_tu(0,x)=u_1(x),
\eneq with $2\pi$-periodic initial Cauchy data $u_0$, $u_1$.
\begin{defi}
For a $2\pi$-periodic solution $u=u(t,x)$ in the $x$ variable, we
introduce the homogeneous energy \beq
\dot{\mathbb{E}}_s(u)(t):=\|u(t,\cdot)\|_{\dot{H}^s(\Omega)}^2+\|\partial_tu(t,\cdot)\|_{\dot{H}^{s-1}(\Omega)}^2,\
\ \ \ s\in\mathbb{R}, \eneq where $\dot{H}^s(\Omega)$ denotes the
homogeneous Sobolev space of index $s$.
\end{defi}
First we introduce some useful auxiliary functions and sequences.
\begin{defi}
For a sufficiently small $\varepsilon>0$, we define \beq
w_\varepsilon(t):=\sin
t\exp(2\varepsilon\int_0^t\psi(\tau)\sin^2\tau d\tau),\eneq \beq
a_\varepsilon(t):=1-4\varepsilon\psi(t)\sin(2t)-2\varepsilon\psi'(t)\sin^2t-4\varepsilon^2\psi^2(t)\sin^4t,\eneq
where the real non-negative $C^\infty$ function $\psi$ is
$2\pi$-periodic on $\mathbb{R}$ and identically 0 in a neighborhood
of 0. Furthermore, it satisfies \beq
\int_0^{2\pi}\psi(\tau)\sin^2(\tau)d\tau=\pi. \eneq
\end{defi}
It is easy to verify the following fact.
\begin{lem}
According to Definition 4.2, $a_\varepsilon\in C^\infty(\mathbb{R})$
and $w_\varepsilon\in C^\infty(\mathbb{R})$. Particularly,
$w_\varepsilon$ is the unique solution of following differential
equation with initial Cauchy data \beq
w''_\varepsilon(t)+a_\varepsilon(t)w_\varepsilon(t)=0,\ \
w_\varepsilon(0)=0,\ \ w'_\varepsilon(0)=1. \eneq
\end{lem}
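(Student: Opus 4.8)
The plan is to verify Lemma 4.3 by direct computation, treating the three assertions separately. First I would check smoothness: since $\psi\in C^\infty(\mathbb{R})$ is $2\pi$-periodic and $\sin t,\sin^2 t,\sin 2t,\psi'$ are all smooth, the integrand $\psi(\tau)\sin^2\tau$ is smooth, so $t\mapsto\int_0^t\psi(\tau)\sin^2\tau\,d\tau$ is smooth; composing with $\exp$ and multiplying by $\sin t$ shows $w_\varepsilon\in C^\infty(\mathbb{R})$. The formula for $a_\varepsilon$ is manifestly a finite combination of smooth functions, so $a_\varepsilon\in C^\infty(\mathbb{R})$. This part is essentially immediate.

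Next I would check the initial conditions, which are the easy sanity checks: $w_\varepsilon(0)=\sin 0\cdot\exp(0)=0$, and differentiating via the product rule,
\[
w_\varepsilon'(t)=\cos t\,\exp\!\Big(2\varepsilon\!\int_0^t\!\psi\sin^2\Big)+\sin t\cdot 2\varepsilon\psi(t)\sin^2 t\cdot\exp\!\Big(2\varepsilon\!\int_0^t\!\psi\sin^2\Big),
\]
so that $w_\varepsilon'(0)=\cos 0\cdot 1+0=1$.

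The main computational step is verifying the ODE $w_\varepsilon''+a_\varepsilon w_\varepsilon=0$. Writing $E(t):=\exp(2\varepsilon\int_0^t\psi(\tau)\sin^2\tau\,d\tau)$ so that $E'=2\varepsilon\psi\sin^2 t\cdot E$ and $w_\varepsilon=\sin t\cdot E$, I would compute $w_\varepsilon'=(\cos t+2\varepsilon\psi\sin^3 t)E$ and then differentiate once more, collecting terms. One gets
\[
w_\varepsilon''=\big(-\sin t+2\varepsilon\psi'\sin^3 t+6\varepsilon\psi\sin^2 t\cos t\big)E+\big(\cos t+2\varepsilon\psi\sin^3 t\big)\cdot 2\varepsilon\psi\sin^2 t\cdot E.
\]
Factoring out $E=w_\varepsilon/\sin t$ (away from zeros of $\sin$, then by continuity everywhere), the claim reduces to the purely algebraic identity
\[
-1+2\varepsilon\psi'\sin^2 t+6\varepsilon\psi\sin t\cos t+2\varepsilon\psi\sin t\cos t\cdot\tfrac{1}{?}\ \cdots = -a_\varepsilon(t),
\]
i.e. after using $2\sin t\cos t=\sin 2t$ and $4\sin^2 t\cos^2 t+\dots$, one must match $-1-4\varepsilon\psi\sin 2t-2\varepsilon\psi'\sin^2 t-4\varepsilon^2\psi^2\sin^4 t$. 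The bookkeeping of the $\varepsilon^1$ terms (the $\sin 2t$ and $\psi'\sin^2 t$ coefficients) and the single $\varepsilon^2$ term ($\psi^2\sin^4 t$) is where care is needed; the expected obstacle is simply keeping the trigonometric simplifications and sign conventions straight rather than any conceptual difficulty. Finally, uniqueness of the solution of the initial value problem follows from the standard Picard–Lindelöf theorem, since $a_\varepsilon$ is continuous (indeed smooth), so the linear second-order ODE with prescribed $w_\varepsilon(0)$, $w_\varepsilon'(0)$ has exactly one solution, which must therefore coincide with the explicitly constructed $w_\varepsilon$. The condition $\int_0^{2\pi}\psi\sin^2=\pi$ plays no role in this lemma itself—it is reserved for the subsequent instability argument—so I would not invoke it here.
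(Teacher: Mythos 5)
Your direct verification is exactly the argument the paper intends (it gives no proof beyond ``it is easy to verify''), your formulas for $w_\varepsilon'$ and $w_\varepsilon''$ are correct, and the bookkeeping you leave open does close: since $\sin t$ factors out of every term (no division needed), $w_\varepsilon''+a_\varepsilon w_\varepsilon=E\sin t\,\big(8\varepsilon\psi\sin t\cos t-4\varepsilon\psi\sin 2t\big)=0$ after the $\psi'\sin^2 t$ and $\varepsilon^2\psi^2\sin^4 t$ terms cancel exactly. You are also right that the normalization $\int_0^{2\pi}\psi(\tau)\sin^2\tau\,d\tau=\pi$ is not used here but only later, for the growth $w_\varepsilon(t+2\pi)=e^{2\pi\varepsilon}w_\varepsilon(t)$ in the instability argument.
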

\begin{defi}
We define a sequence of oscillating intervals $\{I_k\}_k$ by \beq
I_k:=[t_k-\rho_k/2,t_k+\rho_k/2], \eneq and a zero sequence
$\{t_k\}_k$ satisfying \beq 2^P\nu(t_k)t_k^{-1}=\lambda_k,\eneq for
each $k\in\mathbb{N}$. Furthermore, define \beq
\{\rho_k\}_k:=\Big\{2^{-P+p}\pi t_k[\nu(t_k)]/\nu(t_k)\Big\}_k,\eneq
where $p\in \mathbb{N}$ is chosen such that
$2^{p-1}\varepsilon\pi>c_1+1$.
\end{defi}
\begin{rem}
Here we only consider the case $\lim_{t\to 0}\nu(t)=+\infty$. It is
easy to check that the sequences $\{t_k\}_k,\,\{\rho_k\}_k,$\ tend
to $0$. Such choice of $\rho_k$ guarantees that $I_k$\ is contained
in $(0,T]$. Furthermore, $\lambda_k \rho_k/(4\pi)\in\mathbb{N}_+$.
\end{rem}
With these auxiliary functions and sequences, the optimality
argument can be expressed as the following theorem.
\begin{thm}
For the Cauchy problem (1.1), there exists
\begin{itemize}
\item a sequence of
coefficients $\{b^2_k(t)\}_k$ satisfying all assumptions of Theorem
1.1 with constants independent of $k$;
\item a sequence of initial Cauchy data $\{(u^k_{0}(x),u^k_{1}(x))\}_k\in
\dot{H}^s(\Omega)\times\dot{H}^{s-1}(\Omega)$;
\end{itemize}
such that the sequence of corresponding solutions $\{u^k(t,x)\}_k$
satisfies \beq\sup_k\dot{\mathbb{E}}_1(u^k)(0)\leq
C(\varepsilon),\eneq
\beq\sup_k\dot{\mathbb{E}}_1(\exp(-c_1(\varepsilon)\nu(\mu^{-1}(2^P/\sqrt{(i\nabla+{\bf
A}(x))^2})))u^k)(t)=+\infty, \text{for any}\ t\in(0,T],\eneq where
$C(\varepsilon)$ and $c_1(\varepsilon)$ depend on the sufficiently
small positive constant $\varepsilon$.
\end{thm}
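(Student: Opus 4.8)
The plan is to construct an explicit instability mechanism based on the model ODE $w_\varepsilon'' + a_\varepsilon w_\varepsilon = 0$ from Definition 4.2–4.3. The key observation is that the Floquet solution $w_\varepsilon(t) = \sin t \exp(2\varepsilon\int_0^t \psi(\tau)\sin^2\tau\,d\tau)$ grows like $\exp(2\varepsilon\pi \cdot t/(2\pi))$ over each period because of the normalization $\int_0^{2\pi}\psi\sin^2 = \pi$; this is the engine that will defeat the claimed loss of regularity once it is transplanted to the frequency $\lambda_k$ on the interval $I_k$. Concretely, on $I_k = [t_k - \rho_k/2, t_k + \rho_k/2]$ I would set, after rescaling time to run over roughly $\lambda_k\rho_k/(4\pi)$ periods, $b_k^2(t) := a_\varepsilon(\lambda_k(t-t_k)/\text{const})$ patched smoothly to $1$ outside $I_k$, and choose the initial data $(u_0^k,u_1^k)$ to be a single Fourier mode $e^{ik x}$ (adapted to the magnetic shift $a(x)$, i.e. an eigenfunction $\phi_{\lambda_k}$) with amplitude normalized so that $\dot{\mathbb{E}}_1(u^k)(0) = O(1)$ uniformly in $k$ — this gives (4.13).

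Next I would verify that $\{b_k^2\}$ satisfies Assumptions I and II uniformly in $k$. Assumption I is immediate since $a_\varepsilon = 1 + O(\varepsilon)$, so for small $\varepsilon$ we have $C_1 \le b_k^2 \le C_2$ independently of $k$. For Assumption II, the $k$-th derivative bound $|d^j b_k/dt^j| \le C(\nu(t)/t)^j$ is exactly where the defining relation $2^P\nu(t_k)t_k^{-1} = \lambda_k$ from (4.7) is used: differentiating the rescaled $a_\varepsilon$ produces factors of $\lambda_k/(\text{const})$ on $I_k$, and on $I_k$ one has $\lambda_k \sim 2^P\nu(t_k)/t_k \sim \nu(t)/t$ up to constants, because $\rho_k$ in (4.8) is chosen small enough that $\nu(t)/t$ varies by only a bounded factor across $I_k$ (this is the role of the factor $[\nu(t_k)]$ — an integer near $\nu(t_k)$ — in the definition of $\rho_k$, and of Remark 4.4's observation that $\lambda_k\rho_k/(4\pi)\in\mathbb{N}_+$, so the patched coefficient is genuinely $C^2$ at the endpoints of $I_k$).

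Then comes the amplification estimate, which is the heart of the argument. On $I_k$, the Fourier coefficient $\hat{u}^k(t,\lambda_k)$ solves $D_t^2\hat u - \lambda_k^2 b_k^2(t)\hat u = 0$, i.e. (after time rescaling $\sigma = \lambda_k(t-t_k)/\text{const}$) exactly the Hill equation $w'' + a_\varepsilon(\sigma)w = 0$ whose solution $w_\varepsilon$ grows by a factor $\exp(2\varepsilon\pi)$ per period. Over the $\sim \lambda_k\rho_k/(4\pi) \sim 2^{p-P}\nu(t_k)[\nu(t_k)]/\text{const}$ periods contained in $I_k$, the solution is therefore amplified by at least $\exp(c\,2^{p-P}\varepsilon\pi\,[\nu(t_k)]^2/\text{const})$ — far more than $\exp(c_1\nu(t_k))$ once $p$ is chosen with $2^{p-1}\varepsilon\pi > c_1 + 1$ as in Definition 4.3, since $[\nu(t_k)]^2/\nu(t_k) \to \infty$. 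Combining this with the fact that $t_\lambda$ in Theorem 1.1 corresponds to $\mu^{-1}(2^P/\lambda)$ and $\nu(t_{\lambda_k}) \sim \nu(t_k)$, the energy of $\exp(-c_1\nu(\mu^{-1}(\cdots)))u^k$ at any fixed $t \in (0,T]$ (which lies to the right of $I_k$ for all large $k$, so the solution has already undergone the full amplification and then propagates freely with conserved energy) blows up, giving (4.14).

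The main obstacle I expect is the bookkeeping in Step 2–3: one must simultaneously (a) keep the derivative bounds in Assumption II uniform in $k$ — which forces the time-rescaling constant in $b_k^2$ and the width $\rho_k$ to be tuned against $2^P\nu(t_k)/t_k$ exactly as in (4.7)–(4.8), (b) ensure the patching at $\partial I_k$ is $C^2$ without destroying the amplification — handled by the integrality $\lambda_k\rho_k/(4\pi)\in\mathbb{N}_+$ so that $w_\varepsilon$ and $w_\varepsilon'$ match the free solution at the endpoints, and (c) compare the accumulated growth $\exp(c\,\varepsilon\,[\nu(t_k)]^2)$ against the upper bound $\exp(c_1\nu(t_k))$ with the correct constants, which is precisely why $p$ is chosen via $2^{p-1}\varepsilon\pi > c_1+1$. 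Everything else — Plancherel to pass between the ODE estimate and $\dot{\mathbb{E}}_1$, conservation of energy on $[t_k+\rho_k/2, t]$ where $b_k \equiv 1$ — is routine.
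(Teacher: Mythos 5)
Your proposal reproduces the paper's argument in all essentials: rescale $a_\varepsilon$ onto $I_k$ to build $b_k^2$, take a single eigenfunction $\phi_{\lambda_k}$ as data (the paper poses the data at $t=t_k$ with $u^k(t_k,\cdot)=0$, $\partial_t u^k(t_k,\cdot)=\phi_{\lambda_k}$ and propagates backwards to $t=0$ by energy conservation, which is equivalent to your normalization at $t=0$), exploit the Floquet growth of $w_\varepsilon$ across $I_k$, use conservation of $\dot{\mathbb{E}}_1$ where $b_k\equiv 1$, and compare the amplification with $\exp(c_1\nu(t_k))$. Your reading of the roles of $(4.7)$, $(4.8)$, the integrality $\lambda_k\rho_k/(4\pi)\in\mathbb{N}_+$, and the smooth patching is also the paper's.

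One quantitative slip should be corrected, because it sits at the heart of Step 3. From $(4.7)$ and $(4.8)$ one gets
\[
\rho_k\lambda_k \;=\; 2^{-P+p}\pi\, t_k\,\frac{[\nu(t_k)]}{\nu(t_k)}\cdot\frac{2^P\nu(t_k)}{t_k}\;=\;2^{p}\pi\,[\nu(t_k)],
\]
so the number of periods in $I_k$ is $2^{p-2}[\nu(t_k)]$ and the total amplification is $\exp(\varepsilon\rho_k\lambda_k)=\exp\bigl(2^{p}\pi\varepsilon\,[\nu(t_k)]\bigr)$ --- \emph{linear} in $\nu(t_k)$, not the $\exp\bigl(c\,\varepsilon[\nu(t_k)]^2\bigr)$ you wrote (your period count $\sim 2^{p-P}\nu(t_k)[\nu(t_k)]$ carries a spurious extra factor $\nu(t_k)$). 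Consequently the justification ``since $[\nu(t_k)]^2/\nu(t_k)\to\infty$'' is not the reason the argument closes: gain and loss are both linear in $\nu(t_k)$, and the exponent $-2c_1\nu(t_k)+2^{p}\pi\varepsilon[\nu(t_k)]$ tends to $+\infty$ precisely because $2^{p-1}\varepsilon\pi>c_1+1$ makes the gain's coefficient strictly dominate $2c_1$. So the condition on $p$ that you cite is not a safety margin on top of a quadratic gain; it is the entire mechanism. With that correction the proof is the paper's.
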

\begin{proof}
We divide our proof into three steps.\\
\\
{\it Step 1: Construction of a sequence of oscillating coefficients}\\

For each $k\in\mathbb{N}$, we define the oscillating coefficient
$b^2_k(t)$ as \beq b^2_k(t):=\left\{\begin{array}{cc} 1,& t\in\
[0,T]
\setminus I_k;\\
a_\varepsilon(\lambda_k(t-t_k)),& t\in
I_k.\\
\end{array}\right. \eneq
\begin{rem}
The above definition indicates, on the one hand,  $b^2_k\in
C^\infty(\mathbb{R})$ since $a_\varepsilon$ is identically equal to
1 in a neighborhood of $I_k$. On the other hand, \beq
0<b^2_0\leq\inf_{t\in[0,T]} b^2_k(t)\leq\sup_{t \in
[0,T]}b_k^2(t)\leq b^2_1<\infty, \eneq where the positive constants
$b_0$ and $b_1$ are independent of $k$ when we choose an appropriate
$\varepsilon>0$. Simple calculations show that the coefficient
$b^2_k$ satisfies all assumptions of Theorem 1.1 in the interval
$I_k$. While in $[0,T]\backslash I_k$, it is trivial.
\end{rem}
{\it Step 2: Construction of auxiliary functions}\\

Next we study the family of Cauchy problems in
$[t_k-\rho_k/2,t_k+\rho_k/2]\times\Omega$, \beq
\partial_t^2u^k+b^2_k(t)(i\partial_x+a(x))^2u^k=0,\ u^k(t_k,x)=0,\
\partial_tu^k(t_k,x)=u^k_{1}(x).\eneq
Let the initial Cauchy data be \beq
u^k_{1}(x)=\phi_{\lambda_k}(x)\eneq and apply the coordinate
transform \beq s=\lambda_k(t-t_k).\eneq At the same time, define
\beq v^k(s,x):= u^k(t(s),x),\eneq then for
$s\in[-\lambda_k\rho_k/2,\lambda_k\rho_k/2]$, we get \beq
\partial_s^2v^k+\lambda_k^{-2}a_\varepsilon(s)(i\partial_x+a(x))^2v^k=0,\ v^k(0,x)=0,\
\partial_sv^k(0,x)=u^k_{1}(x)/\lambda_k. \eneq
As a matter of fact, we have a unique solution for (49) in the form
of \beq
v^k(s,x)=\lambda_k^{-1}\phi_{\lambda_k}(x)w_\varepsilon(s).\eneq
Transforming back to $u^k(t,x)$, we arrive at \beq
u^k(t,x)=\lambda_k^{-1}\phi_{\lambda_k}(x)w_\varepsilon(\lambda_k(t-t_k))\eneq
in $I_k$. Further calculations lead to \beq
\begin{array}{ll} u^k\big(t_k-\rho_k/2,x\big)=0,&
\partial_tu^k\big(t_k-\rho_k/2,x\big)=\phi_{\lambda_k}(x)\exp(-\varepsilon\rho_k\lambda_k/2),\\
\\
u^k\big(t_k+\rho_k/2,x\big)=0,&
\partial_tu^k\big(t_k+\rho_k/2,x\big)=\phi_{\lambda_k}(x)\exp(\varepsilon\rho_k\lambda_k/2).
\end{array} \eneq
{\it Step 3: Existence of $\nu$-loss of regularity}\\

Now we introduce an energy conservation law in the sense of
pseudo-differential operators.
\begin{lem}
For the Cauchy problem in\ $(t,x)\in \mathbb{R}\times\Omega$, \beq
\partial_t^2u+(i\partial_x+a(x))^{2}u=0,\ \ \ u(t_0,x)=0,\ \ \
\partial_tu(t_0,x)=\phi_{\lambda}(x),\eneq
then the energy conservation law holds, that is, \beq
\dot{\mathbb{E}}_s(u)(t)=\dot{\mathbb{E}}_s(u)(t_0). \eneq
\end{lem}
\begin{proof}
In effect, we have the following explicit representation of the
unique solution by virtue of separation of variables: \beq
u(t,x)=\sin(\lambda(t-t_0))\phi_\lambda (x)/\lambda. \eneq By
applying the definition of homogeneous Sobolev spaces
$\dot{H}^s(\Omega)$, $s\in\mathbb{R}$, we calculate the homogeneous
energy for the solution $u$. It holds that \beq\begin{array}{ll}
\dot{\mathbb{E}}_s(u)(t)&=\|u(t,\cdot)\|_{\dot{H}^s(\Omega)}^2+\|\partial_tu(t,\cdot)\|_{\dot{H}^{s-1}(\Omega)}^2\\
\\
&=\displaystyle\sum_{\lambda^2\in\Lambda}|\hat{u}(t,\lambda)|^2|\lambda|^{2s}+\displaystyle\sum_{\lambda^2\in\Lambda}|\partial_t
\hat{u}(t,\lambda)|^2|\lambda|^{2(s-1)}\\
\\
&=\lambda^{2(s-1)}(\sin^2(\lambda(t-t_0))+\cos^2(\lambda(t-t_0)))\\
\\
&=\lambda^{2(s-1)}=\dot{\mathbb{E}}_s(u)(t_0).
\end{array}\eneq
\end{proof}
Therefore,
\begin{eqnarray} && \dot{\mathbb{E}}_1(u^k)(t)=\exp(-\varepsilon\rho_k\lambda_k),\ \ \text{for}\ \
t\in[0,t_k-\rho_k/2]; \\
&& \dot{\mathbb{E}}_1(u^k)(t)=\exp(\varepsilon\rho_k\lambda_k),\ \
\text{for}\ \ t\in[t_k+\rho_k/2,T].
\end{eqnarray}
It is evident that (41) follows directly from (57). While for
$t\in[t_k+\rho_k/2,T]$, we have
\beq\begin{array}{ll}&\dot{\mathbb{E}}_1(\exp(-c_1\nu(\mu^{-1}(2^P/\sqrt{(i\nabla+{\bf A}(x))^2})))u^k)(t)\\
\\
=&\dot{\mathbb{E}}_1(\exp(-c_1\nu(\mu^{-1}(2^P/\lambda_k)))u^k)(t)\\
\\
=&\exp(-2c_1\nu(\mu^{-1}(2^P/\lambda_k)))\dot{\mathbb{E}}_1(u^k)(t)\\
\\
=&\exp(-2c_1\nu(\mu^{-1}(2^P/\lambda_k))+\varepsilon\rho_k\lambda_k)\\
\\
=&\exp(-2c_1\nu(t_k)+\varepsilon\rho_k\lambda_k).
\end{array}\eneq
Taking into account the choice of $\rho_k$ and $t_k$, we have (42).
This concludes our proof.
\end{proof}
\begin{rem}
Periodic functions are very useful tools in the construction of
coefficients for instability arguments. This kind of techniques is
frequently used in the discussion of Floquet theory etc.
\cite{MC4,MC5,L3, REI1}.
\end{rem}
{\bf Acknowledgment}: This project is partially supported by US Air
Force Office of Scientific Research (AFOSR FA9550-10-1-0487),
Natural Science Foundation of Jiangsu Province (BK 20130598),
National Natural Science Foundation of China (NSFC 71673043,
71273048, 71473036, 11471072), the Scientific Research Foundation
for the Returned Overseas Chinese Scholars, Fundamental Research
Funds for the Central Universities on the Field Research of
Commercialization of Marriage between China and Vietnam (No.
2014B15214). This work is also supported by Open Research Fund
Program of Jiangsu Key Laboratory of Engineering Mechanics,
Southeast University (LEM16B06).


\begin{thebibliography}{10}
\bibitem{RA} R. Adams, Sobolev Spaces, Academic Press, New York,
1975.
\bibitem{RA2} R. Agliardi and M. Cicognani, Operators of $p$-evolution
with non regular coefficients in the time variable, J. Differential
Equations, 202(2004), 143-157.
\bibitem{FC9} F. Colombini and N. Lerner, Hyperbolic operators with non-Lipschitz coefficients,
Duke Math. J. 77 3 (1995), 657-698.
\bibitem{MC1} M. Cicognani and F. Colombini, Modulus of continuity of
the coefficients and loss of derivatives in the strictly hyperbolic
Cauchy problem, J. Diff. Equa., 221(2006), 143-157.
\bibitem{MC3} M. Cicognani and F. Colombini, Loss of derivatives in evolution Cauchy problems,  Ann. Univ. Ferrara Sez. VII. Sci. Mat., 52(2006), 271-280.
\bibitem{MC4} M. Cicognani, F. Hirosawa, and M. Reissig, Loss of regularity for $p$-evolution type models, J. Math. Anal. Appl., 347(2008), 35-58.
\bibitem{MC5} M. Cicognani, F. Hirosawa, and M. Reissig, The Log-effect for $p$-evolution type models, J. Math. Soc. Japan, 60(2008), 819-863.
\bibitem{CA} T. Cazanave, Semilinear Schr\"{o}dinger Equations (Courant Lecture
Notes), AMS, 2003.
\bibitem{LF} L. Fanelli, Electromagnetic Schr\"{o}dinger flow: multiplier methods for
dispersion, Proc. Journ\'{e}es EDP, Port D'Albret, GDR 2434(CNRS),
2010.
\bibitem{MG} M. Goldberg, Strichartz estimates for Schr\"{o}dinger
operators with a non-smooth magnetic potential, Discrete and
Continuous Dynmaical Systems, 31(2011), 109-118.
\bibitem{HK} H. Kovarik, Large time
behavior of the heat kernel of two-dimensional magnetic
Schr\"{o}dinger operators, preprint, 2010.
\bibitem{L3} X. Lu, On $\sigma$-evolution equations, Doctoral
Thesis, 2010.
\bibitem{L1} X. Lu, Z. Tu and X. Lv, On the exact controllability of hyperbolic magnetic Schr?dinger equations, Nonlinear Analysis, 109(2014): 319-340.
\bibitem{ME} M. E. Peskin and D. V. Schr\"{o}der, An Introduction to Quantum Field Theory, Westview Press, 1995.
\bibitem{REI1} M. Reissig, Hyperbolic equation with non-Lipschitz coefficients, Rend. Sem. Mat. Univ. Pol. Torino, Microlocal analysis, Vol. 61 (2003),
135-182.
\bibitem{MR} M. Reed and B. Simon, Methods of Modern Mathematical
Physics, I-IV, Academic Press, London, 1978.
\bibitem{MT1} Michael E. Taylor, Partial Differential Equations I, Basic Theory, Springer Verlag, 1999.
\bibitem{MT3} Michael E. Taylor, Partial Differential Equations II, Qualitative Studies of Linear Equations, Springer Verlag, 1999.
\end{thebibliography}
\end{document}